\documentclass[12pt,reqno]{amsart}

\usepackage{amsmath,amssymb,amsthm,bm,mathrsfs,braket,sasaki,color}
\usepackage{graphicx}
\makeatletter
\@addtoreset{equation}{section}
\makeatother

\addtolength{\textwidth}{1.2in}
\addtolength{\evensidemargin}{-0.5in}
\addtolength{\oddsidemargin}{-0.5in}
\addtolength{\marginparwidth}{1in}

\theoremstyle{plain}
\newtheorem{theorem}{Theorem}[section]

\newtheorem{lemma}[theorem]{Lemma}

\newtheorem{corollary}[theorem]{Corollary}

\theoremstyle{definition}

\newcommand{\LR}{L^2(\BR)}

\theoremstyle{remark}

\newcommand{\mmm}[4]
{\left(
\!\!\!\begin{array}{cc}#1&#2\\ #3&#4\end{array}\!\!\!\right)
}

\newcommand{\vvv}[1]
{\left(
\!\!\!\begin{array}{l}#1\end{array}\!\!\!\right)
}
\newcommand{\EEE}{E_0}
\newcommand{\half}{\frac{1}{2}}
\newcommand{\han}{{1/2}}
\newcommand{\kak}[1]{(\ref{#1})}
\newcommand{\ff}{g_1}
\newcommand{\fff}{g_2}
\newcommand{\gr}{\varphi_{\rm g}}
\renewcommand{\d}{\displaystyle}
\newcommand{\ms}{\mathscr }
\newcommand{\PPP}{{\rm P}}
\newcommand{\QQQ}{{\rm R}}

\newcommand{\bi}{\begin{description}}
\newcommand{\ei}{\end{description} }
\newcommand{\ab}{\varepsilon}

\newcommand{\oo}[1]
{\langle #1\rangle_{11}}

\newcommand{\ot}[1]
{\langle #1\rangle_{12}}

\renewcommand{\tt}[1]
{\langle #1\rangle_{22}}

\title{  Multiplicity of the lowest eigenvalue of  non-commutative harmonic oscillators  }
\author{Fumio Hiroshima}
\address{Faculty of Mathematics, Kyushu University,  Fukuoka, 819-0395, Japan}
\email{hiroshima@math.kyushu-u.ac.jp.}
\author{Itaru Sasaki}
\address{ Fiber-Nanotech Young Researcher Empowerment Center, 
 Shinshu University, Matsumoto 390--8621, Japan}
\email{isasaki@shinshu-u.ac.jp}

\date{\today}

\keywords{non-commutative harmonic oscillator, multiplicity, lowest eigenvalue}
\subjclass[2000]{35P05, 35P15}

\begin{document}
\maketitle

\begin{abstract}
The multiplicity of the lowest eigenvalue $E$ of the so-called non-commutative harmonic oscillator $Q(\alpha,\beta)$ is studied. 
 It is shown that $E$ is 
 simple for $\alpha$ and $\beta$ in some region. 
 \end{abstract}



\setlength{\baselineskip}{15pt}

\section{Definition and main results}
Recently a special attention is payed to studying the  spectrum of self-adjoint operators with {\it non-commutative} coefficients.
It is considered not only as mathematics but also physics experiments. 
A historically important model is the Dirac operator, and the 
Rabi model and the Jaynes-Cumming model are prevalent in cavity QED.  
See \cite{hh12} and references therein. 
The non-commutative harmonic oscillator is a quantum system defined by the Hamiltonian:
\begin{align}
Q = Q (\alpha,\beta) = A\tensor \left(-\frac{1}{2}\frac{d^2}{dx^2}+\frac{1}{2}x^2 \right)
      + J\tensor \left(x\frac{d}{dx}+\frac{1}{2}\right),
\end{align}
where $A= \begin{pmatrix} \alpha & 0\\ 0&\beta\end{pmatrix}$,
$J= \begin{pmatrix}0 & -1 \\ 1&0\end{pmatrix}$, and 
$\alpha,\beta>0$ parameters 
with 
 $\alpha\beta>1$.
Operator  $Q $ defines  a positive self-adjoint operator acting in the Hilbert space $\cH={\mathbb C}^2\otimes\LR$. 

The non-commutative harmonic oscillator $Q$ has been introduced by Parmeggiani and Wakayama \cite{pw01,pw02a,pw02b,pw03}, 
and  the  spectral property of $Q $ is considered in \cite{p04, p06, p08a} from the pseudo-differential-calculus point of view.  
It can be seen that 
 $Q $ has purely discrete spectrum 
$\lambda_1\leq \lambda_2\leq \cdots\leq \lambda_n\leq\cdots \uparrow \infty$, 
where the eigenvalues are counted  with multiplicity. 
One can define the so-called spectral zeta function associated with $Q$ as 
 $$\zeta_Q(s)=\sum_{n=1}^\infty \frac {1}{\lambda_n^s}.$$ 
 When $\alpha=\beta$, $Q$ is unitarily equivalent to the direct some of harmonic oscillators, and 
 $\lambda_{2m-1}=\lambda_{2m}=\sqrt{\alpha^2-1}(m +\half)$,  
 and thus $\zeta_Q$ with $\alpha\not=\beta$  can be regarded as 
a $q$-deform of the Riemann zeta-function. 
Analytic properties of the spectral zeta-function is studied in \cite{iw05a, iw05b, iw07, kw06,kw07,ky09}. 
Furthermore 
it is also known that the set of {\it odd} eigenvectors of  non-commutative harmonic oscillator  is deeply related to the 
set of some solutions of the Heun differential  equation \cite{iw05b,o01,o04}:  
$$\frac{\partial^2}{\partial w^2} f+
\left(
 \frac{1-n}{w}+\frac{-n}{w-1}+\frac{n+3/2}{w-a}\right)
 \frac{\partial}{\partial w} f+
\frac{-(3/2)nw-q}{w(w-1)(w-a)}f=0,$$
where $n\in{\mathbb N}\cup\{0\}$, $a\in{\mathbb C}$ with $|a|<1$ and $q\in{\mathbb C}$.

In this paper we concentrate on the study of the lowest eigenvalue
 $\lambda_1$ of $Q$.
We set 
\begin{equation}
E = \lambda_1
\end{equation}
and $p=-id/dx$.
 In particular we are interested in determining the dimension of ${\rm Ker}(Q-E)$. 
The eigenvector associated with   the lowest eigenvalue is called the  ground state.
In the case of $\alpha=\beta$, as is mentioned above,
 $Q $ can be diagonalized as
 $Q \cong \mmm h 0 0 h$ with
\begin{align}\label{cho}
h=
\half p^2+\frac{\alpha^2-1}{2}x^2,
\end{align}
where $\cong$ denotes the unitary equivalence.
Then all the eigenvalues of $Q (\alpha,\alpha)$ are two-fold degenerate.
In particular, its lowest eigenvalue 
\begin{align}
\label{lowest}
\EEE  = \half \sqrt{\alpha^2-1}
\end{align}
is two-fold degenerate.
In the general case, $\alpha\not=\beta$,  the so-called 
Ichinose-Wakayama bound  is established in \cite{iw07}:
\begin{align}
\left (j-\frac{1}{2}\right) \min\{\alpha,\beta\}  \sqrt{\frac{\alpha\beta-1}{\alpha\beta}}
 \leq \lambda_{2j-1}  \leq \lambda_{2j} \leq 
 \left(j-\frac{1}{2}\right)
 \max\{ \alpha, \beta\}
 \sqrt{\frac{\alpha\beta-1}{\alpha\beta}} \label{iwbounds}
\end{align}
for $j\in {\mathbb N}$. 
By this  inequality
we see that
 the multiplicity of $E$ is 
at most  two   if  $\beta <3\alpha$ or $\alpha<3\beta$.

Furthermore beyond above results one may expect that $E$ is simple for $\alpha\not=\beta$. 
In \cite{nnw02} it can be numerically shown that $E$ is simple for $\alpha\not=\beta$ and,
 in \cite{p04} the  simplicity is proven but only for  sufficiently large $\alpha\beta$. 
It is then mentioned in \cite[8.3 Notes]{p08b} that 
the determining the multiplicity of the lowest eigenvalue should be explored.

In this paper we show that 
\bi
\item[(a)]  $E$ is at most twofold degenerate  for 
$(\alpha,\beta)\in (2,\infty)\times(2,\infty)$, 
\item[(b)]
 $E$ is simple for some region of $\alpha$ and $\beta$. 
\ei
In order to prove (a), we apply the method in \cite{hir05}, where 
the so-called  pull-through formula \cite{gj68}  is  a key. 
The second result (b) consists of two estimates. 
The first is for  large $|\beta-\alpha|$ and 
the second  for small $|\beta-\alpha|$ but $\alpha\neq \beta$.
The first case is proven in a similar manner to (a) 
 and the second  by the regular perturbation theory of discrete spectrum.

 Let ${\ms G}={\rm Ker}(Q -E)$ be the set of ground states.
Let $L_+\subset \LR$ (resp. $L_-$)  be the set of even functions (resp.odd functions).
We define $\cH_\pm={\mathbb C}^2\otimes L_\pm$. 
Since $Q $ conserves the parity, $Q $ is reduced by $\cH_\pm$. 
Set $Q \lceil_{\cH_\pm}=Q _\pm$ and then $Q =Q _+\oplus Q _-$. 

\begin{theorem}{\label{th1}}
Suppose that 
$(\alpha,\beta)\in (2,\infty)\times(2,\infty)$.
Then  ${\rm dim} {\ms G}\leq 2$ 
 and ${\ms G}\subset \cH_+$. 
I.e., 
the multiplicity of $E$  is at most two and ground states are  even functions.
\end{theorem}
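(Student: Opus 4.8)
The plan is to realize $Q$ through harmonic-oscillator creation and annihilation operators and then to exploit the pull-through identity of \cite{gj68} for the annihilation operator, following \cite{hir05}. Writing $a=\tfrac{1}{\sqrt2}(x+\tfrac{d}{dx})$, $a^\ast=\tfrac{1}{\sqrt2}(x-\tfrac{d}{dx})$ and $N=a^\ast a$, one has $-\half\tfrac{d^2}{dx^2}+\half x^2=N+\half$ and $x\tfrac{d}{dx}+\half=\half(a^2-(a^\ast)^2)$, so $Q=A\otimes(N+\half)+J\otimes\half(a^2-(a^\ast)^2)$. Put $\mathbf a=1\otimes a$ on $\cH$. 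From $[N,a]=-a$ and $[a^2-(a^\ast)^2,a]=2a^\ast$ one obtains the pull-through relation
\[
[Q,\mathbf a]=-A\otimes a+J\otimes a^\ast .
\]
Because $Q$ is elliptic, every element of $\ms G$ belongs to the Schwartz class, so all operations with $\mathbf a$ and $1\otimes a^\ast$ below are legitimate on $\ms G$.

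I would first establish the parity statement $\ms G\subset\cH_+$, which is the analytic core. As $Q=Q_+\oplus Q_-$ and $E=\inf\mathrm{spec}(Q)$, it suffices to exclude an odd ground state, so suppose $0\neq\psi=(u,v)\in\cH_-$ solves $Q\psi=E\psi$. Then $\mathbf a\psi=(au,av)\in\cH_+$, and $\mathbf a\psi\neq0$ since $\ker a=\mathbb C h_0\subset L_+$ meets $L_-$ only in $0$. The relation $Q\mathbf a\psi=E\mathbf a\psi+[Q,\mathbf a]\psi$ yields the energy identity $\langle\mathbf a\psi,Q\mathbf a\psi\rangle=E\|\mathbf a\psi\|^2+\langle\mathbf a\psi,[Q,\mathbf a]\psi\rangle$, where
\[
\langle\mathbf a\psi,[Q,\mathbf a]\psi\rangle=-\alpha\|au\|^2-\beta\|av\|^2-\langle au,a^\ast v\rangle+\langle av,a^\ast u\rangle .
\]
The key ingredient is the elementary inequality $\|a^\ast w\|^2=\langle w,(N+1)w\rangle\le2\langle w,Nw\rangle=2\|aw\|^2$, valid for $w\in L_-$ because $N\ge1$ there. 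Bounding the two cross terms by Cauchy--Schwarz and this inequality gives $\langle\mathbf a\psi,[Q,\mathbf a]\psi\rangle\le-(\alpha-\sqrt2)\|au\|^2-(\beta-\sqrt2)\|av\|^2<0$, hence $\langle\mathbf a\psi,Q\mathbf a\psi\rangle<E\|\mathbf a\psi\|^2$, contradicting $Q\ge E$. Therefore no odd ground state exists and $\ms G\subset\cH_+$. The estimate in fact needs only $\alpha,\beta>\sqrt2$, which is amply guaranteed by the hypothesis $\alpha,\beta>2$.

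To obtain $\dim\ms G\le2$ I would use the pull-through formula in its coordinate form, namely that a ground state is determined by its vacuum component. Knowing $\ms G\subset\cH_+$, expand a ground state in the Hermite basis $\{h_n\}$ as $u=\sum_n u_nh_n$, $v=\sum_n v_nh_n$ over even $n$. The eigenequations $\alpha(N+\half)u-\half(a^2-(a^\ast)^2)v=Eu$ and $\beta(N+\half)v+\half(a^2-(a^\ast)^2)u=Ev$, read at level $n$, express $v_{n+2}$ through $u_n,v_{n-2}$ and $u_{n+2}$ through $v_n,u_{n-2}$; since the coefficient $\sqrt{(n+1)(n+2)}$ never vanishes, all coefficients are determined recursively by $(u_0,v_0)\in\mathbb C^2$. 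Thus $\psi\mapsto(u_0,v_0)$ is injective on $\ms G$ and $\dim\ms G\le2$. Equivalently, the vacuum projection $P_0=1\otimes|h_0\rangle\langle h_0|$ is injective on $\ms G$, which is exactly the conclusion produced by the pull-through formula in \cite{hir05}.

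The main obstacle is to pin down the sign of the commutator term, that is, to dominate the cross terms $-\langle au,a^\ast v\rangle+\langle av,a^\ast u\rangle$ in the energy identity; this is precisely where the lower bounds on $\alpha,\beta$ are spent, via the oscillator inequality $\|a^\ast w\|\le\sqrt2\|aw\|$ on $L_-$. A secondary point, settled by the ellipticity of $Q$, is to justify that ground states lie in the domains of $\mathbf a$ and $1\otimes a^\ast$ so that both the energy identity and the recursion are rigorous. With this regularity in hand, the two steps combine to give both assertions of Theorem \ref{th1}.
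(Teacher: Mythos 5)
Your proposal is correct, and it shares only its starting point with the paper --- the pull-through commutator $[Q,a]=-Aa+Ja^{*}$ --- before diverging into a genuinely different argument on both halves of the theorem. The paper converts the commutator into the resolvent identity $a\varphi=(Q-E+A)^{-1}Ja^{*}\varphi$ for a ground state $\varphi$, deduces the number-operator bound $\langle\varphi,N\varphi\rangle\le(\alpha^{2}-1)^{-1}\|\varphi\|^{2}$ (inequality \eqref{ptb1}), and then obtains both conclusions from trace inequalities against the rank-two vacuum projection $\PPP_\Omega$: on the even sector $\mathrm{Tr}(\PPP^{+}\PPP_\Omega\PPP^{+})\le 2$ gives $\mathrm{Tr}\,\PPP^{+}\le 2(\alpha^{2}-1)/(\alpha^{2}-2)<3$, which is exactly where $\alpha>2$ is spent, while on the odd sector $\PPP^{-}\PPP_\Omega\PPP^{-}=0$ forces $\mathrm{Tr}\,\PPP^{-}=0$. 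You instead prove the parity statement by a variational contradiction (the commutator expectation on an odd eigenstate is strictly negative, via $\|a^{*}w\|\le\sqrt{2}\|aw\|$ on $L_{-}$), and the dimension bound by a Hermite-coefficient recursion showing that an even eigenfunction is uniquely determined by its vacuum component $(u_{0},v_{0})$. Both steps are sound: the domain questions you settle by global ellipticity are indeed covered by the Parmeggiani--Wakayama theory (or can be bypassed by pairing the eigenvalue equation weakly with the test vectors $e_{j}\otimes h_{n}$), and you correctly establish parity first, since without it the recursion would start from $(u_{0},v_{0},u_{1},v_{1})$ and give only $\dim{\ms G}\le 4$. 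Your route proves slightly more than Theorem \ref{th1}: it needs only $\alpha,\beta>\sqrt{2}$ rather than $\alpha,\beta>2$, and the recursion itself requires nothing beyond $\alpha\beta>1$ and bounds every eigenspace of $Q_{\pm}$, not just the lowest, by two. What the paper's route buys in exchange: the quantitative estimate \eqref{ptb1} is reused verbatim as a key input in the proof of Theorem \ref{th2}, and the trace method is the one of \cite{hir05}, designed for quantum field models where no explicit eigenbasis is available; your recursion, by contrast, is tied to the banded structure of $Q$ in the Hermite basis and would not transfer to such settings.
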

We can furthermore  show that $E$ is simple. 
 \begin{theorem}{\label{th2}}
Suppose $\beta>\alpha>2$ and
\begin{align}
 \frac{1}{2} > \left(\frac{1}{2}\beta -E \right)^{-2}\frac{1}{\alpha^2-1}+ \frac{1}{\alpha^2-1}. \label{th2cond}
\end{align}
Then $E$  is simple.
 \end{theorem}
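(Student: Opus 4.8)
The plan is to work inside the even sector, where Theorem~\ref{th1} places all ground states, and to show that the vacuum data of a ground state is effectively one-dimensional once \kak{th2cond} holds. Write $\psi=\vvv{f\\ g}\in{\ms G}\subset \mathcal H_+$; in terms of $b=\frac{1}{\sqrt2}\left(x+\frac{d}{dx}\right)$, $N=b^\ast b$, $H_0=N+\half=-\half\frac{d^2}{dx^2}+\half x^2$ and $K=x\frac{d}{dx}+\half=\half\left(b^2-(b^\ast)^2\right)$ (so $K^\ast=-K$), the eigenvalue equation $Q\psi=E\psi$ becomes the pair $\alpha H_0 f-Kg=Ef$ and $Kf+\beta H_0 g=Eg$. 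Expanding $f,g$ in the Hermite basis $\{\phi_n\}_{n\ \mathrm{even}}$ and reading off the coefficient of $\phi_n$ gives a three-term recursion that determines $(f_{n+2},g_{n+2})$ from $(f_n,g_n,f_{n-2},g_{n-2})$; since $f_{-2}=g_{-2}=0$, the whole vector $\psi$ is determined by its vacuum data $(f_0,g_0)$, which re-proves $\dim{\ms G}\le 2$ in the manner of Theorem~\ref{th1}.

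Next I would exploit an extra symmetry to split these two degrees of freedom. Because $e^{i\frac\pi2 N}$ conjugates $b^2$ (hence $K$) into its negative while fixing $H_0$, the operator $S=\mmm{1}{0}{0}{-1}\otimes e^{i\frac\pi2 N}$ commutes with $Q$ and satisfies $S^2=\mathbf 1\otimes(\text{parity})$, so $S$ restricts to an involution on $\mathcal H_+$ that preserves ${\ms G}$ and splits it as ${\ms G}={\ms G}_+\oplus{\ms G}_-$ according to $S=\pm1$. On ${\ms G}_+$ one has $g_0=0$ and on ${\ms G}_-$ one has $f_0=0$, so by the recursion each summand is at most one-dimensional. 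The whole point is therefore reduced to showing that one summand is trivial; since $\beta>\alpha$, the configuration with the vacuum $\phi_0$ sitting in the heavier ($\beta$) oscillator is the expensive one, and I would aim to prove ${\ms G}_-=\{0\}$, which yields $\dim{\ms G}=\dim{\ms G}_+\le1$.

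To rule out ${\ms G}_-$, suppose $0\neq\psi=\vvv{f\\ g}\in{\ms G}_-$, so that $f_0=0$ and necessarily $g_0\neq0$. Since $\alpha H_0-E>0$ on the subspace where $Kg$ lives and $\beta H_0-E>0$ (using $E<\frac\beta2$, which follows from \kak{iwbounds}), I can eliminate $f=(\alpha H_0-E)^{-1}Kg$ from the first equation and substitute into the second to obtain the Feshbach identity $\langle g,(\beta H_0-E)g\rangle=\langle Kg,(\alpha H_0-E)^{-1}Kg\rangle$. The strategy is to contradict this by bounding the right-hand side strictly below the left-hand side for every admissible $g$: isolating the vacuum component through the gap $\frac\beta2-E$ produces the factor $\left(\frac\beta2-E\right)^{-2}$, estimating the resolvent $(\alpha H_0-E)^{-1}$ sandwiched between the two $K$'s against the oscillator $h$ of \kak{cho} produces the factor $\frac{1}{\alpha^2-1}$, and $H_0\ge\half$ supplies the $\half$. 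Condition \kak{th2cond} is exactly the inequality that makes the resulting bound strict, forcing $g=0$ and hence ${\ms G}_-=\{0\}$.

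The main obstacle is this last resolvent estimate. The operator $K(\alpha H_0-E)^{-1}K$ is unbounded (it grows like $N$), so one cannot simply take an operator norm; the estimate must be relative, exploiting that the only dangerous direction is the vacuum $\phi_0$ and that on its orthogonal complement $\beta H_0-E$ already dominates. Controlling the cross terms between $\phi_0$ and the higher Hermite modes so that they close into the clean combination appearing in \kak{th2cond}—rather than the cruder $(5\alpha-2E)^{-1}$ that a naive diagonal bound gives—is the delicate point, and is where the comparison with the diagonalized oscillator $h$ of frequency $\sqrt{\alpha^2-1}$ enters.
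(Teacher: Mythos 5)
The structural half of your argument is sound and genuinely different from the paper's: the three-term Hermite recursion, the commutation $[Q,S]=0$ for $S=\sigma_3\otimes e^{i\pi N/2}$ (indeed $\sigma_3 J\sigma_3=-J$ and $e^{i\pi N/2}$ conjugates $K=\half(b^2-(b^\ast)^2)$ into $-K$), and the resulting splitting ${\ms G}={\ms G}_+\oplus{\ms G}_-$ with each summand at most one-dimensional, reducing simplicity to ${\ms G}_-=\{0\}$. The gap is that your proof stops exactly at the point where the hypothesis \kak{th2cond} has to enter. The Feshbach identity $\inner{g}{(\beta H_0-E)g}=\inner{Kg}{(\alpha H_0-E)^{-1}Kg}$ is correct for $\gr=\vvv{f\\ g}\in{\ms G}_-$, but the inequality that is supposed to contradict it is never established; you yourself flag it as ``the main obstacle'' and ``the delicate point.'' Moreover, the route you sketch cannot produce the constants appearing in \kak{th2cond}: in ${\ms G}_-$ the vector $Kg$ is supported on Hermite modes $n\geq 2$, so the natural resolvent bound is by $\left(\frac{5}{2}\alpha-E\right)^{-1}$, while $\norm{Kg}^2$ grows like $\inner{g}{N^2g}$; nothing in this mechanism produces the factor $\frac{1}{\alpha^2-1}$. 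In the paper that factor does not come from any resolvent estimate at all: it comes from the pull-through bound \kak{ptb1}, $\inner{\gr}{N\gr}\leq\frac{1}{\alpha^2-1}\norm{\gr}^2$, which is a property of ground states derived from $a\gr=(Q-E+A)^{-1}Ja^\ast\gr$, not a property of $(\alpha H_0-E)^{-1}$. So, as written, the proposal establishes only the sector splitting, and the analytic core of Theorem \ref{th2} is missing.

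For comparison, and as a way to close the gap: the paper works with the rank-one projection $\QQQ=M^\bot\PPP_\Omega$ and proves, by a commutator computation together with \kak{ptb1}, that $\norm{\QQQ\gr}^2\leq\left(\frac{\beta}{2}-E\right)^{-2}\frac{1}{\alpha^2-1}\norm{\gr}^2$ for every ground state $\gr$; it then deduces ${\rm Tr}\,\PPP<2$ from the operator inequality $\PPP_\Omega M+\QQQ+N\geq 1$. Your symmetry reduction would combine with those two estimates even more cleanly than the paper's own trace argument: for a normalized $\gr\in{\ms G}_-$ one has $f_0=0$, i.e.\ $\PPP_\Omega M\gr=0$, hence
\begin{align*}
1=\norm{\gr}^2=\norm{\PPP_\Omega M\gr}^2+\norm{\QQQ\gr}^2+\norm{(1-\PPP_\Omega)\gr}^2
\leq\left(\left(\tfrac{\beta}{2}-E\right)^{-2}+1\right)\frac{1}{\alpha^2-1}<\half
\end{align*}
by \kak{th2cond}, a contradiction (in fact only $1$, not $\half$, is needed on the left of \kak{th2cond} in this route, so your scheme would even yield a slightly stronger statement). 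But the two pull-through estimates used here are precisely what your write-up lacks; without them, or a worked-out substitute, the argument does not close.
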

Condition \kak{th2cond} includes the implicit value $E$.
Let 
\begin{equation*}
E_\mathrm{upper}
 =\frac{\sqrt{\alpha\beta}\sqrt{\alpha\beta-1}}{\alpha+\beta+|\alpha-\beta|
   \frac{(\alpha\beta-1)^{1/4}}{\sqrt{\alpha\beta}}{\rm Re} \rho},
\end{equation*}
where $\rho^2= \sqrt{\alpha\beta-1}-i$ with $\mathrm{Re}\, \ome>0$, i.e., 
$\d {\rm Re} \, \rho=\sqrt{\frac{\sqrt{\alpha\beta}(\sqrt{\alpha\beta-1}+1)}{2}}$.
Bound $E<E_\mathrm{upper}$ holds. See \cite[Theorem 8.2.1]{p08b}.
Combining this  with  \kak{th2cond}
we have the corollary:
\begin{corollary}\label{colo13}
Suppose $\beta>\alpha>2$ and
\begin{align}
\label{co2cond} \frac{1}{2} > \left(\frac{1}{2}\beta -E_\mathrm{upper}
  \right)^{-2}\frac{1}{\alpha^2-1}+ \frac{1}{\alpha^2-1}.
\end{align}
Then $E$  is simple.
 \end{corollary}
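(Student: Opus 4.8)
The plan is to obtain Corollary~\ref{colo13} as a direct consequence of Theorem~\ref{th2}, by showing that its hypothesis \kak{co2cond} already forces the hypothesis \kak{th2cond}. The mechanism is monotonicity of the right-hand side of these inequalities in the energy variable, combined with the two a priori bounds $E<E_\mathrm{upper}$ of \cite[Theorem 8.2.1]{p08b} and $E<\frac12\beta$, the latter following from the upper bound in \kak{iwbounds} with $j=1$ since $\beta>\alpha$ and $\sqrt{(\alpha\beta-1)/(\alpha\beta)}<1$.

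Concretely, I would set
$$
f(y)=\left(\frac{1}{2}\beta-y\right)^{-2}\frac{1}{\alpha^2-1}+\frac{1}{\alpha^2-1},
$$
so that \kak{th2cond} is exactly $\frac12>f(E)$ and \kak{co2cond} is exactly $\frac12>f(E_\mathrm{upper})$. On the half-line $\{y<\frac12\beta\}$ the map $y\mapsto(\frac12\beta-y)^{-2}$ is strictly increasing, hence so is $f$; therefore, provided $E$ and $E_\mathrm{upper}$ both lie in this region and $E<E_\mathrm{upper}$, we get $f(E)<f(E_\mathrm{upper})$. Thus \kak{co2cond} yields $\frac12>f(E_\mathrm{upper})>f(E)$, i.e.\ \kak{th2cond}, and Theorem~\ref{th2} then gives the simplicity of $E$.

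The single point requiring verification is that $E_\mathrm{upper}<\frac12\beta$, placing $E_\mathrm{upper}$ in the monotone region. Since the denominator of $E_\mathrm{upper}$ is positive, this is equivalent to $2\sqrt{\alpha\beta}\sqrt{\alpha\beta-1}<\beta\bigl(\alpha+\beta+(\beta-\alpha)\tfrac{(\alpha\beta-1)^{1/4}}{\sqrt{\alpha\beta}}{\rm Re}\,\rho\bigr)$, where I have used $|\alpha-\beta|=\beta-\alpha$. As $\beta>\alpha$ and ${\rm Re}\,\rho>0$, the correction term is positive, so it suffices to check $2\sqrt{\alpha\beta}\sqrt{\alpha\beta-1}<\beta(\alpha+\beta)$. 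Here the left-hand side equals $2\sqrt{(\alpha\beta)^2-\alpha\beta}<2\alpha\beta$, while the right-hand side is $\alpha\beta+\beta^2>2\alpha\beta$ precisely because $\beta>\alpha$, which closes the estimate. I do not expect a genuine obstacle: the whole argument is the monotonicity of $f$ together with this elementary inequality, which after discarding the positive correction term reduces to $\beta^2>\alpha\beta$.
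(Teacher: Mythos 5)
Your proposal is correct and follows essentially the same route as the paper: the paper's one-line proof likewise deduces \kak{th2cond} from \kak{co2cond} via the bound $E<E_\mathrm{upper}$ of \cite[Theorem 8.2.1]{p08b} together with Theorem \ref{th2}. Your only addition is the explicit verification that $E_\mathrm{upper}<\frac{1}{2}\beta$, so that the monotonicity of $y\mapsto\left(\frac{1}{2}\beta-y\right)^{-2}$ genuinely applies; this fills in a point the paper leaves implicit, but it is not a different argument.
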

\begin{figure}[t]\label{fig100}
\begin{center}
\includegraphics[width=8cm]{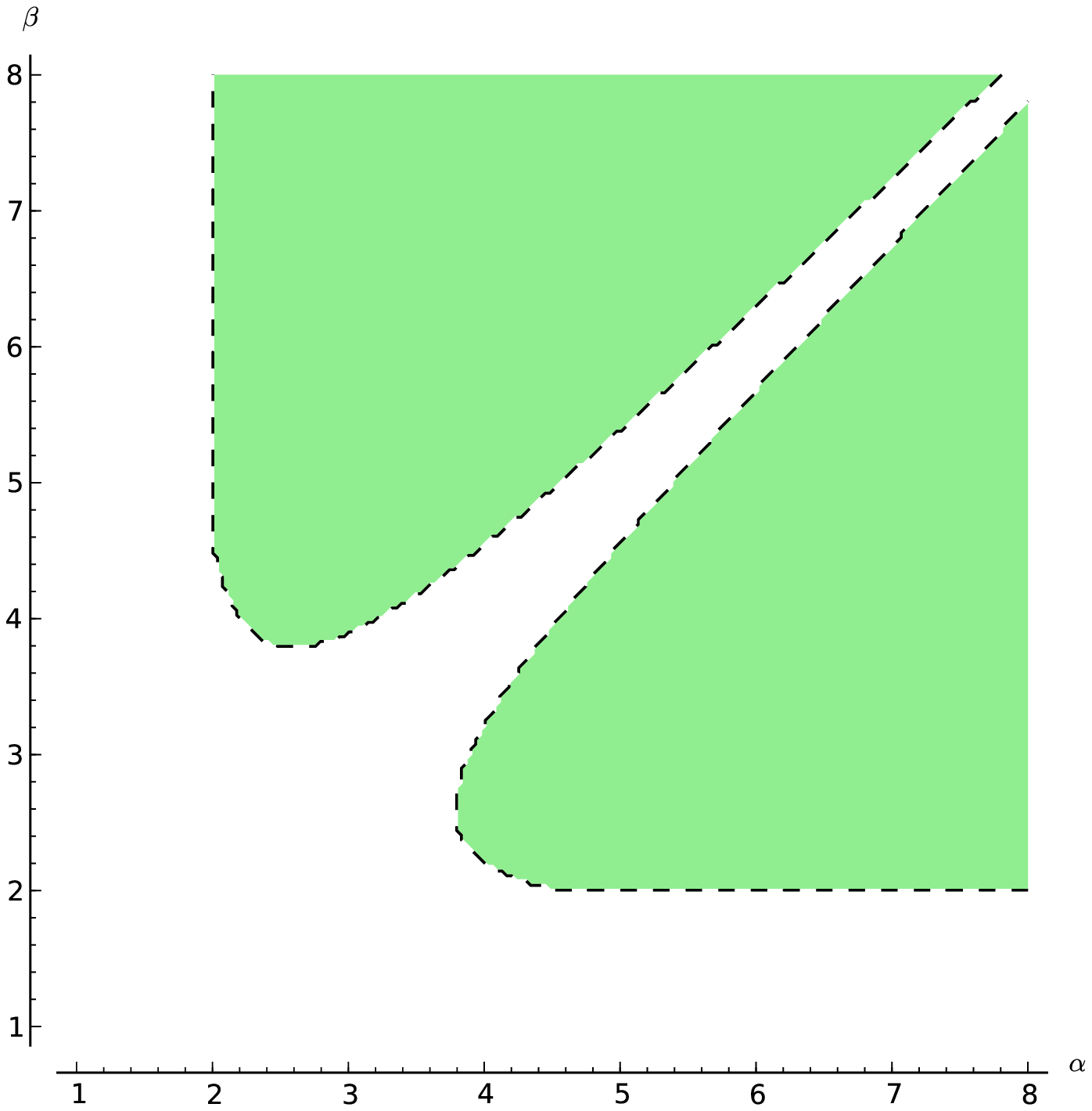}
 \caption{The region satisfying \kak{co2cond}}
(The case of $\alpha>\beta$ is also drawn.)
\end{center}
 \end{figure}
Theorem \ref{th2} does not valid for $(\alpha,\beta)$ 
in  a neighborhood of 
the diagonal
 line on $\alpha-\beta$ plane. See Figure  \ref{fig100}. 
We can also however show that $E$ is simple  for $\alpha$ and $\beta$ in a neighborhood of  the diagonal line.
  We define $g_1,...,g_4$  by 
  \begin{align}
& \label{g1}
\ff = (\alpha-1)^{-1} \left( 3 + \frac{\sqrt{3}}{ \sqrt{\alpha^2-1} } \right),\\
& \label{g2}\fff= \frac{\sqrt{\alpha^2-1}}{2\left| \sqrt{\alpha^2-1} - \lambda_2 \right|}
\ff^2 \\
&\label{g3} g_3 = \frac{\alpha}{2\sqrt{\alpha^2-1}} \\
& \label{g4}g_4 = \frac{(\sqrt{\alpha^2-1})^{3/2}}{4\alpha^{3/2}}.
\end{align}
 \begin{theorem}{\label{th3}}
Let $\ab =\beta-\alpha$.
 Assume that $\beta>\alpha>1$,
  $\sqrt{\beta^2-1}\leq 3\sqrt{\alpha^2-1}$
and $\ab ^2 \fff<1/2$.
Let
\begin{align}
&\kappa(\ab ) = \EEE  g_1^2 + 
\ab  g_2(\EEE g_1+g_3+g_4) + 
\ab ^22 \EEE  g_1^2 g_2 + \ab ^3 2 \EEE  g_1  g_2^2,\\
&
\ell(\ab)=(1-\ab^2 g_2) \sqrt{1-2\ab^2 g_2^2}.
\end{align}
Then 
\begin{align}\label{positive}
|\lambda_1-\lambda_2|
\geq
\frac{2 \ab}{\ell(\ab)} (g_4 - \ab  \kappa(\ab ))
\end{align}
In particular 
when 
$
 \ab  \kappa(\ab ) < g_4$, 
 $E$ is simple. 
 \end{theorem}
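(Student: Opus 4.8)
The plan is to treat $Q(\alpha,\beta)$ as an analytic perturbation of the diagonalizable operator $Q(\alpha,\alpha)$ and to run \emph{quantitative} degenerate perturbation theory in the parameter $\ab=\beta-\alpha$. Writing
\[
Q(\alpha,\beta)=Q(\alpha,\alpha)+\ab\,V,\qquad V=\mmm{0}{0}{0}{1}\tensor\half(p^2+x^2)\ \geq 0,
\]
we note that by \kak{cho} the unperturbed operator satisfies $Q(\alpha,\alpha)\cong\mmm{h}{0}{0}{h}$, so its lowest eigenvalue $\EEE=\half\sqrt{\alpha^2-1}$ is exactly twofold degenerate, with the next level $\tfrac32\sqrt{\alpha^2-1}$ sitting a full gap $\sqrt{\alpha^2-1}$ above. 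The hypotheses $\beta>\alpha>1$ and $\sqrt{\beta^2-1}\le 3\sqrt{\alpha^2-1}$, fed into the Ichinose--Wakayama bounds \kak{iwbounds}, keep $\lambda_3$ strictly separated from $\{\lambda_1,\lambda_2\}$ throughout the admissible range of $\ab$; hence exactly two eigenvalues $\lambda_1\le\lambda_2$ emanate from $\EEE$, and it suffices to bound their splitting from below.

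First I would carry out the first-order analysis on the two-dimensional unperturbed ground space $\ms G_0=\mathrm{Ker}(Q(\alpha,\alpha)-\EEE)$. Diagonalizing $J$ via $u_\pm=\frac{1}{\sqrt2}(\pm i,1)^{\mathrm t}$ block-diagonalizes $Q(\alpha,\alpha)$ into the two squeezed oscillators $\alpha\half(p^2+x^2)\mp\half(xp+px)$, each with ground energy $\EEE$ and an explicit Gaussian ground state $\gr^{\pm}$, so that $\ms G_0=\mathrm{span}\{u_+\tensor\gr^{+},\,u_-\tensor\gr^{-}\}$. Projecting $V$ onto $\ms G_0$ gives a Hermitian $2\times2$ matrix whose (equal) diagonal entries equal the common shift $g_3$ and whose off-diagonal entry has modulus $g_4$, so the first-order level splitting is $2\ab g_4$. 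Evaluating the first and second moments of $\half(p^2+x^2)$ in the Gaussian states $\gr^{\pm}$ produces the closed forms $g_3=\frac{\alpha}{2\sqrt{\alpha^2-1}}$ and $g_4=\frac{(\sqrt{\alpha^2-1})^{3/2}}{4\alpha^{3/2}}$ of \kak{g3}--\kak{g4}, while $\ff$ of \kak{g1} serves as an explicit upper bound for $\|V(u_\pm\tensor\gr^{\pm})\|$.

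Next I would control all higher orders by a variational/Schur reduction. Introduce the reduced resolvent $R=(Q(\alpha,\alpha)-\EEE)^{-1}(1-P)$, with $P$ the orthogonal projection onto $\ms G_0$, whose norm is set by the spectral gap and enters through $\fff$ of \kak{g2}. Form the first-order corrected trial vectors $\phi_\pm=(u_\pm\tensor\gr^{\pm})+\ab R V(u_\pm\tensor\gr^{\pm})$, whose span is invariant for $Q$ up to $O(\ab^2)$; the standing assumption $\ab^2\fff<\half$ makes this construction nondegenerate and the underlying Neumann series convergent. Writing the compressed operator $\langle\phi_a,Q\phi_b\rangle$ together with the Gram matrix $\langle\phi_a,\phi_b\rangle=\delta_{ab}+O(\ab^2)$, the pair $\lambda_1,\lambda_2$ appears as the two roots of the associated generalized eigenvalue problem: its off-diagonal part reproduces the first-order splitting $2\ab g_4$, all remaining contributions are dominated term by term, using $\ff,\fff,g_3,g_4,\EEE$, by $\ab^2\kappa(\ab)$, and the deviation of the Gram matrix from the identity contributes precisely the factor $\ell(\ab)=(1-\ab^2\fff)\sqrt{1-2\ab^2\fff^2}$, which is positive under $\ab^2\fff<\half$.

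Finally, the gap of a $2\times2$ self-adjoint problem is at least twice the modulus of its off-diagonal entry, so subtracting the accumulated error from the first-order splitting and dividing by the Gram factor yields
\[
|\lambda_1-\lambda_2|\ \geq\ \frac{2\ab}{\ell(\ab)}\bigl(g_4-\ab\,\kappa(\ab)\bigr),
\]
which is \kak{positive}; when $\ab\,\kappa(\ab)<g_4$ the right-hand side is strictly positive, forcing $\lambda_1<\lambda_2$ and hence the simplicity of $E$. I expect the main obstacle to be exactly the quantitative step: bounding the reduced resolvent $R$ and the oscillator moments by the \emph{explicit} constants $\ff,\fff,g_3,g_4$ rather than mere asymptotics, and then propagating them cleanly through the Schur/Gram reduction so that the tilt factor $\ell(\ab)$ and the error $\kappa(\ab)$ emerge without hidden higher-order losses that would swallow the leading splitting $g_4$.
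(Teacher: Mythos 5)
Your proposal follows the paper's skeleton in outline: the same splitting $Q=Q_0+\ab V$ with $Q_0=Q(\alpha,\alpha)$, the same first-order data (equal diagonal entries, off-diagonal entry of modulus $g_4$, hence leading splitting $2\ab g_4$), a reduction to a $2\times 2$ problem, and the gap bound via the off-diagonal entry. However, there is a genuine gap at the decisive step. You compress $Q$ onto the span of the first-order corrected trial vectors $\phi_\pm=u_\pm+\ab\, R V u_\pm$, where your $R$ is the \emph{reduced resolvent} $(Q_0-\EEE)^{-1}(1-P_0)$, and you then assert that ``the pair $\lambda_1,\lambda_2$ appears as the two roots of the associated generalized eigenvalue problem.'' This is false: your subspace is only approximately invariant (invariant up to $O(\ab^2)$, as you yourself note), so the roots of the compressed problem are Rayleigh--Ritz values, not the eigenvalues $\lambda_1,\lambda_2$. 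A lower bound on the gap of the compressed matrix therefore does not translate into a lower bound on $|\lambda_2-\lambda_1|$ unless you add a quantitative Ritz-value-versus-eigenvalue estimate (Kato--Temple or subspace-perturbation type, using the a priori cluster localization); you never supply this, and any such estimate would inject extra error terms that are not accounted for in $\kappa(\ab)$.

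The paper avoids this problem entirely by working with the \emph{exact} spectral subspace. Using the sandwich/Ichinose--Wakayama bounds and the hypothesis $\sqrt{\beta^2-1}\le 3\sqrt{\alpha^2-1}$, it first shows that exactly two eigenvalues lie inside the disk $C$ centered at $\EEE$ of radius $\frac{1}{2}\sqrt{\alpha^2-1}$; it then forms the Riesz projection $P=-\frac{1}{2\pi i}\oint_{\partial C}(Q-z)^{-1}dz$, expands $P=P_0+\ab P_1+\ab^2 R$ with an explicit contour-integral remainder $R$, and applies Gram--Schmidt to $Pu_1,Pu_2$ inside ${\rm Ran}\,P$. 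Since ${\rm Ran}\,P$ is exactly the eigenspace span for $\{\lambda_1,\lambda_2\}$, the resulting $2\times2$ matrix has precisely $\lambda_1,\lambda_2$ as eigenvalues, and the off-diagonal lower bound is legitimate. This construction is also what generates the specific constants in the theorem: $g_2$ is exactly the contour-integral bound $\|R\|\le \frac{\sqrt{\alpha^2-1}}{2}\,g_1^2\,|\lambda_2-\sqrt{\alpha^2-1}|^{-1}$ (note that $g_2$ contains $\lambda_2$, which the reduced resolvent at $\EEE$ could never produce), and $\kappa(\ab)$, $\ell(\ab)$ collect term by term the expansion of the numerator $\langle Pu_1,\, Q(\|Pu_1\|^2Pu_2-(Pu_1,Pu_2)Pu_1)\rangle$ and the Gram denominators. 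Your claim that these exact factors ``emerge precisely'' from your construction is therefore postulated rather than derived. To repair the argument you would either have to replace your trial vectors by their images under the exact Riesz projection --- which is the paper's proof --- or carry out explicit Ritz-value error bounds and absorb them into a necessarily different $\kappa$.
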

Note that we know the bound 
$\lambda_2\leq\frac{\beta}{2}\frac{\sqrt{\alpha\beta-1}}{\sqrt{\alpha\beta}}$ by the Ichinose-Wakayama bound.
Then, the region of $\alpha,\beta$ satisfying  $\ab  \kappa(\ab ) < g_4$, 
includes 
 a  wedge-shaped  region illustrated in Figure \ref{fig200},
  where we also drew the case of $\alpha>\beta$.
\begin{figure}[t]\label{fig200}
\begin{center}
\includegraphics[width=8cm]{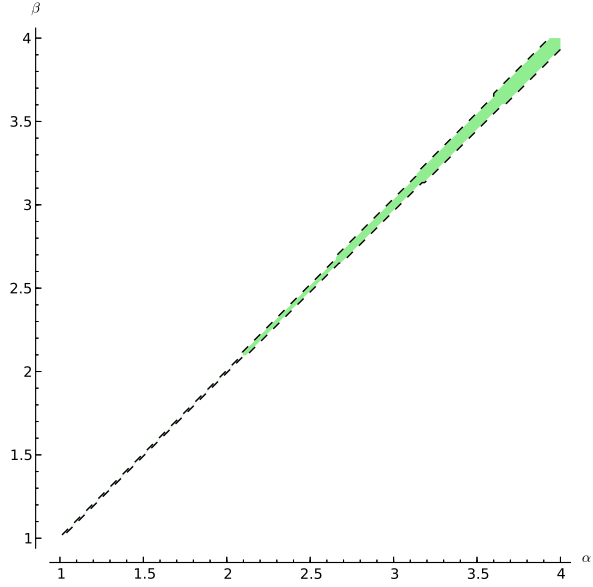}
 \caption{A wedge-shaped region included in the region satisfying 
  $\ab  \kappa(\ab ) < g_4$}
  \end{center}
\end{figure}

\section{Proofs of theorems}
\subsection{Proof of Theorem \ref{th1}}
In the following we omit the symbol $\tensor$ for the notational simplicity, and we can suppose that $\alpha<\beta$ without loss of generality.
Let 
$$N=\half \begin{pmatrix} 1&0\\0&1\end{pmatrix}({p^2+x^2-1}) .$$
The spectrum of $N$ is $\sigma(N)=\{0\}\cup {\mathbb N}$ and the multiplicity of each eigenvalues.
Let $\d a=\frac{1}{\sqrt 2}({x+ip})$ and $\d a^*=\frac{1}{\sqrt 2} (x-ip)$. They satisfy canonical commutation
relations $[a,a^*]=1$ and $[a,a]=[a^*,a^*]=0$, and we have $a^*a=N$

{\it Proof of Theorem \ref{th1}}: 
In terms of $a$ and $a^*$, the operator $Q $ can be realized as
\begin{align}
 Q  = a^*Aa + \frac{1}{2}A + \frac{1}{2}\left(aJa - a^*Ja\right).
\end{align}
Let $\gr \in {\ms G}$. 
We have $(Q -E)a\gr  = [Q ,a]\gr $.
Since $[Q ,a]= -Aa + Ja^*$ by canonical commutation relation, we have $(Q -E)a\gr  = (-Aa+Ja^*)\gr $
and then $(Q -E+A)a\gr  = Ja^*\gr $. Notice that $Q -E+A\geq \alpha>0$. We have
\begin{align}
 a\gr  = (Q -E+A)^{-1}Ja^*\gr. 
\end{align}
 Taking the norm on both sides above,
we have
\begin{align}
  \norm{a\gr }^2  \leq \frac{1}{\alpha^2} \norm{a^*\gr }^2.
\end{align}
Since $\norm{a\gr }^2 = \inner{\gr }{N\gr }$ and
 $\norm{a^*\Omega}=\inner{\gr }{N\gr }+ \norm{\gr }^2$, we see that
 \begin{align}
  \inner{\gr }{N\gr } \leq \frac{1}{\alpha^2-1}\norm{\gr }^2. \label{ptb1}
 \end{align}
Let $\PPP_\Omega$ be the projection onto  $\ker N = \ker a$.
Note that $N+\PPP_\Omega\geq 1$.
Let ${\ms G}={\ms G}_+\oplus {\ms G}_-$, where ${\ms G}_\pm={\ms G}\cap \cH_\pm$.
Let $\PPP ^\pm$ be the projection onto ${\ms G}_\pm$.
 Then, by \eqref{ptb1}, we have
\begin{align}
 \PPP ^+ \PPP_\Omega \PPP ^+ \geq \PPP ^+ (1-N)\PPP ^+ \geq 
 \left(1-\frac{1}{\alpha^2-1}\right)\PPP ^+.
\end{align}
Taking the trace of both sides, we have
\begin{align}
 2\geq \mathrm{Tr}(\PPP ^+ \PPP_\Omega \PPP ^+)\geq \frac{\alpha^2-2}{\alpha^2-1}\mathrm{Tr} \PPP ^+
\end{align}
Thus we have the bound
\begin{align}
 \dim \ker \PPP ^+ \leq 2\frac{\alpha^2-1}{\alpha^2-2}.
\end{align}
Then the right hand side above is less than three for $\alpha>2$. Then  ${\rm dim} {\ms G}^+\leq 2$.
Similarly but replacing $\PPP ^+$ with $\PPP ^-$, we can also see that 
\begin{align}
 \PPP ^- \PPP_\Omega \PPP ^- \geq \PPP ^- (1-N)\PPP ^- \geq
 \left(1-\frac{1}{\alpha^2-1}\right)\PPP ^-.
\end{align}
Note that $\PPP ^- \PPP_\Omega \PPP ^-=0$, since $\PPP_\Omega$ is the projection to the set of even functions. Then we have
\begin{align}
 0= \mathrm{Tr}(\PPP ^- \PPP_\Omega \PPP ^-)\geq \frac{\alpha^2-2}{\alpha^2-1}\mathrm{Tr} \PPP ^-.
\end{align}
In particular for $\alpha>\sqrt 2$, the dimension of ${\ms G}_-$ equal to zero. 
Then the theorem follows. 
\qed

\subsection{Proof of Theorem \ref{th2}}
In this subsection we show that the lowest eigenvalue is simple. 
The strategy is parallel with 
that of previous subsection but $\PPP^+ $ is replaced by a projection $\QQQ $ with the dimension of ${\rm Ran} \QQQ =1$. 
Let $\sigma_1,\sigma_2,\sigma_3$ be the $2\times 2$ Pauli matrices given by 
\begin{align}
 \sigma_1 = \begin{pmatrix} 0 & 1 \\ 1 & 0 \end{pmatrix},\quad
 \sigma_2 = \begin{pmatrix} 0 & -i \\ i & 0 \end{pmatrix},\quad
 \sigma_3 = \begin{pmatrix} 1 & 0 \\ 0 & -1 \end{pmatrix}.
\end{align}
{\it Proof of Theorem \ref{th2}}: 
 The Hamiltonian $Q $ can be written in the form:
\begin{align}
  Q  = \half A ({p^2+x^2}) + \half \sigma_2 ({px+xp}).
\end{align}
We set $M=\frac{1}{2}(1+\sigma_3)=\begin{pmatrix} 1 & 0 \\ 0 & 0 \end{pmatrix}$,
$M^\bot = \frac{1}{2}(1-\sigma_3)
=\begin{pmatrix} 0 & 0 \\ 0 & 1 \end{pmatrix}
$ and $\QQQ =M^\bot \PPP_\Omega$.
Then we have
\begin{align}
(Q -E) \QQQ  \gr   = M^\bot [Q ,\PPP_\Omega] \gr  + [Q ,M^\bot]\PPP_\Omega \gr . 
\end{align}
The commutator $[Q ,M^\bot]$ can be computed as 
\begin{align}
   [Q ,M^\bot]    
    = -\frac{1}{2} (aa-a^\ast a^\ast)\sigma_1.
\end{align}
Thus we have
\begin{align}
 [Q ,\QQQ ] =  M^\bot \frac{i}{2}\sigma_2[aa-a^\ast a^\ast,\PPP_\Omega]
 +\frac{1}{2}a^\ast a^\ast \sigma_1 \PPP_\Omega,
\end{align}
where we used the fact that $a\PPP_\Omega =0$.
Hence we have
\begin{align}
\inner{\QQQ \gr }{(Q -E)\QQQ  \gr }  
& = \inner{\gr }{\frac{i}{2}\QQQ M^\bot \sigma_2 [aa-a^\ast a^\ast,\PPP_\Omega]\gr }
  + \frac{1}{2} \inner{\gr }{\QQQ a^\ast a^\ast\sigma_1\PPP_\Omega\gr } \notag \\
&= \frac{i}{2} \inner{\gr }{\QQQ \sigma_2 a^2\gr }.
\end{align}
On the other hand
$
  \QQQ (Q -E)\QQQ  
                               = \left(\frac{1}{2}\beta - E \right)\QQQ ^2$.
Then \eqref{ptb1} and $\|a^\ast \QQQ \gr\|=\|\QQQ \gr\|$ yield that 
\begin{align*}
 \left(\frac{1}{2}\beta - E\right) \norm{\QQQ \gr }^2
 \leq \frac{1}{2}\norm{a^*\QQQ \gr } 
 \norm{\sigma_2a\gr }
 \leq \frac{1}{\sqrt{\alpha^2-1}}\norm{\QQQ \gr }  \norm{\gr }. \label{po}
\end{align*}
Therefore
\begin{align*}
 \norm{\QQQ \gr }^2 \leq \left(\frac{1}{2}\beta - E \right)^{-2}\frac{1}{\alpha^2-1}\norm{\gr }^2.
\end{align*}
Since $M+M^\bot =1$, it holds that
$
  \PPP_\Omega M + \QQQ  + N \geq 1$.
 Then, by using \eqref{ptb1} we have 
\begin{align*}
 \PPP  (\PPP_\Omega M ) \PPP 
\geq \PPP  (1-\PPP_\Omega M^\bot -N) \PPP
 = \left(1- \left(\frac{1}{2}\beta -E\right)^{-2}\frac{1}{\alpha^2-1}- \frac{1}{\alpha^2-1 } \right) \PPP 
\end{align*}
where $P=P^++P^-$ is the orthogonal projection onto ${\ms G}$.
Taking the trace of both sides above, 
 we have
\begin{align*}
 1 \geq \left(1- \left(\frac{1}{2}\beta -E\right)^{-2}\frac{1}{\alpha^2-1} - \frac{1}{\alpha^2-1 } \right) {\rm Tr}  \PPP ,
\end{align*}
and the theorem follows.
\qed

\subsection{Proof of Theorem \ref{th3}}
Recall that $\ab=\beta-\alpha$. 
In this section, we fix an arbitrary $\alpha>1$ and set 
\begin{align}
  Q (\alpha,\beta)=Q=Q(\ab ) = Q _0  + \ab  V,
\end{align}
where
$Q _0 = Q (\alpha,\alpha)$ and 
$ V =\half  \begin{pmatrix}  0&0\\ 0&   1 \end{pmatrix} ({p^2+x^2}) $.
 \begin{lemma}{\label{relbound}}
For all $\Phi \in D(Q _0)$, it follows that
\begin{align}
 \norm{V\Phi} \leq (\alpha-1)^{-1} \norm{Q _0 \Phi} + \frac{\sqrt{3}}{2}(\alpha-1)^{-1}\norm{\Phi}.
 \label{relb}
\end{align}
 \end{lemma}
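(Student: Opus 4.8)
The plan is to treat both $V$ and $Q_0$ as expressions in the quadratic operators in $x$ and $p$ and to exploit the $\mathfrak{sl}_2$-structure they carry. Writing $H_0=\half(p^2+x^2)$, $S=\half(xp+px)$ and $K=\half(x^2-p^2)$, and keeping the paper's convention that a scalar operator stands for its tensor product with the $2\times2$ identity, one has $Q_0=Q(\alpha,\alpha)=\alpha H_0+\sigma_2 S$ and $V=\begin{pmatrix}0&0\\0&1\end{pmatrix}H_0$. Since the matrix prefactor of $V$ is an orthogonal projection commuting with $H_0$, the first (trivial) step is $\norm{V\Phi}\le\norm{H_0\Phi}$, which reduces the lemma to bounding $H_0$ relatively to $Q_0$.

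Next I would solve $Q_0=\alpha H_0+\sigma_2 S$ for $H_0$. From $\alpha H_0=Q_0-\sigma_2 S$ and the unitarity of $\sigma_2$ one gets
\begin{align*}
\alpha\norm{H_0\Phi}\le\norm{Q_0\Phi}+\norm{\sigma_2 S\Phi}=\norm{Q_0\Phi}+\norm{S\Phi}.
\end{align*}
Hence everything comes down to a relative bound of $S$ with respect to $H_0$, and I would aim for the sharp estimate $\norm{S\Phi}^2\le\norm{H_0\Phi}^2+\tfrac34\norm{\Phi}^2$, as this is exactly what generates the constants $(\alpha-1)^{-1}$ and $\tfrac{\sqrt3}{2}(\alpha-1)^{-1}$.

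The heart of the proof is the scalar identity $K^2+S^2=H_0^2+\tfrac34$. I would establish it by writing $a^2=K+iS$ and $a^{\ast2}=K-iS$, so that the commutator terms cancel in $a^{\ast2}a^2+a^2a^{\ast2}=2(K^2+S^2)$, while $a^{\ast2}a^2=N(N-1)$ and $a^2a^{\ast2}=(N+1)(N+2)$ give $a^{\ast2}a^2+a^2a^{\ast2}=2N^2+2N+2=2H_0^2+\tfrac32$ with $H_0=N+\half$; dividing by $2$ yields the identity. Since $K$ is symmetric, $\inner{\psi}{K^2\psi}=\norm{K\psi}^2\ge0$, so discarding $K^2$ gives $\inner{\psi}{S^2\psi}\le\inner{\psi}{(H_0^2+\tfrac34)\psi}$ on the span of the Hermite functions, and hence on $D(Q_0)$. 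Being scalar, this lifts componentwise to $\mathbb C^2\otimes L^2(\mathbb R)$, giving $\norm{S\Phi}^2\le\norm{H_0\Phi}^2+\tfrac34\norm{\Phi}^2$ and therefore $\norm{S\Phi}\le\norm{H_0\Phi}+\tfrac{\sqrt3}{2}\norm{\Phi}$ by $\sqrt{A^2+B^2}\le A+B$.

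Finally I would assemble the pieces: inserting this into $\alpha\norm{H_0\Phi}\le\norm{Q_0\Phi}+\norm{S\Phi}$ gives $(\alpha-1)\norm{H_0\Phi}\le\norm{Q_0\Phi}+\tfrac{\sqrt3}{2}\norm{\Phi}$, and since $\alpha>1$ I divide by $\alpha-1$ and use $\norm{V\Phi}\le\norm{H_0\Phi}$ to obtain \eqref{relb}. The main obstacle is pinning down the precise constant $\tfrac34$: a crude triangle or Cauchy--Schwarz bound applied to $S=\tfrac{i}{2}(a^{\ast2}-a^2)$ overcounts and fails to give a relative bound of $H_0$ with relative bound one, so the key point is recognising the metaplectic ($\mathfrak{sl}_2$) Casimir relation $K^2+S^2=H_0^2+\tfrac34$, which makes the constant explicit; the remaining manipulations are routine.
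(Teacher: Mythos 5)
Your proof is correct and takes essentially the same route as the paper: the identical splitting $Q_0=\alpha H_0+\sigma_2 S$ (with $H_0=\frac12(p^2+x^2)$, $S=\frac12(px+xp)$), the triangle inequality using unitarity of $\sigma_2$, the bound $\|V\Phi\|\le\|H_0\Phi\|$, and the key estimate $\|Su\|^2\le\|H_0u\|^2+\frac34\|u\|^2$, which is exactly the paper's inequality $\|(px+xp)u\|^2\le\|(p^2+x^2)u\|^2+3\|u\|^2$. The only difference is that the paper asserts this key inequality without proof (``one can show''), whereas you derive it from the identity $K^2+S^2=H_0^2+\frac34$ with $K=\frac12(x^2-p^2)$, via $a^2=K+iS$, $a^{*2}a^2=N(N-1)$ and $a^2a^{*2}=(N+1)(N+2)$; that derivation is correct, so your write-up is, if anything, more complete than the paper's.
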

 \begin{proof}
One can show that $\norm{(px+xp)u}^2 \leq \norm{(p^2+x^2)u}^2 + 3\norm{u}^2$.
Since $\sigma_2^2=1$, we have
\begin{align*}
 \norm{Q _0\Phi} 
\geq  & \alpha \norm{\half ({p^2+x^2})\Phi} - \norm{\half ({px+xp})\Phi} \notag \\
 \geq & \alpha \norm{\half ({p^2+x^2})\Phi}
            - \frac{1}{2} \left( \norm{(p^2+x^2)\Phi} ^2 +3\norm{\Phi}^2\right)^{1/2},
\end{align*}
and hence
\begin{align} 
\norm{Q _0\Phi}\geq&  \frac{\alpha-1}{2}\norm{(p^2+x^2)\Phi} 
                                    - \frac{\sqrt{3}}{2} \norm{\Phi}.
\end{align}
Noticing  $\norm{V\Phi} \leq  \frac{1}{2}\norm{(p^2+x^2)\Phi}$ 
we have the bound \eqref{relb}.
\end{proof}
By \kak{iwbounds} or the sandwich estimate  $Q (\alpha,\alpha)\leq Q (\alpha,\beta)\leq Q (\beta,\beta)$
we see bounds:
$
 \frac{\sqrt{\alpha^2-1}}{2} \leq \lambda_1 \leq \lambda_2 \leq  \frac{\sqrt{\beta^2-1}}{2} $
and
$
  \frac{3}{2}\sqrt{\alpha^2-1} \leq \lambda_3$.
When  $\sqrt{\beta^2-1} \leq 3\sqrt{\alpha^2-1}$, 
 $Q$ has 
 exactly two eigenvalues in 
 the  
 interval $\left[
 \frac{1}{2}\sqrt{\alpha^2-1} , \frac{1}{2} \sqrt{\beta^2-1} 
 \right] $.
Let $C$ be the closed  disk  centered at $\frac{\sqrt{\alpha^2-1}}{2}$ with 
the radius $\frac{\sqrt{\alpha^2-1}}{2}$ in the complex plane:
\begin{align}
 C = \left\{\left. \frac{\sqrt{\alpha^2-1}}{2}+r e^{i\theta}\in {\mathbb C} \right|
 0\leq r\leq \frac{\sqrt{\alpha^2-1}}{2}, 
 0\leq \theta\leq 2\pi\right\}.
\end{align}
Thus $Q$ has exactly  two eigenvalues, $\lambda_1$ and $\lambda_2$, inside of  $C$.
 
 \begin{lemma}{\label{relb2}}
It follows that $  \norm{V(Q _0-z)^{-1}}  \leq \ff$ 
 for all $z\in \del C$. 
 \end{lemma}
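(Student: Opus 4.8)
The plan is to deduce the lemma from the relative bound of Lemma \ref{relbound} by inserting $\Phi=(Q_0-z)^{-1}u$ for an arbitrary unit vector $u$; since $\mathrm{Ran}\,(Q_0-z)^{-1}=D(Q_0)$ this is legitimate. This reduces the whole statement to two uniform resolvent estimates on the contour $\del C$, namely a bound on $\norm{(Q_0-z)^{-1}}$ and one on $\norm{Q_0(Q_0-z)^{-1}}$.

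First I would read off the spectrum of $Q_0=Q(\alpha,\alpha)$ from the diagonalization \kak{cho}: since $Q_0\cong\mmm h 0 0 h$ with $h=\half p^2+\frac{\alpha^2-1}{2}x^2$ a harmonic oscillator of frequency $\sqrt{\alpha^2-1}$, we have $\sigma(Q_0)=\{\sqrt{\alpha^2-1}(n+\half):n=0,1,2,\dots\}$, each eigenvalue twofold degenerate. The disk $C$ is centered at the lowest eigenvalue $\frac{\sqrt{\alpha^2-1}}{2}$ with radius equal to that same value, so I claim $\mathrm{dist}(z,\sigma(Q_0))=\frac{\sqrt{\alpha^2-1}}{2}$ for every $z\in\del C$: the center eigenvalue sits at exactly the radius from each boundary point, the next eigenvalue $\frac{3}{2}\sqrt{\alpha^2-1}$ lies at distance $\sqrt{\alpha^2-1}$ from the center and hence at distance $\geq\frac{\sqrt{\alpha^2-1}}{2}$ from the circle, and all higher eigenvalues are farther still. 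Self-adjointness of $Q_0$ then gives $\norm{(Q_0-z)^{-1}}=2/\sqrt{\alpha^2-1}$ uniformly on $\del C$.

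For the second estimate I would write $Q_0(Q_0-z)^{-1}=1+z(Q_0-z)^{-1}$ and use $|z|\leq\sqrt{\alpha^2-1}$ on $\del C$ (the rightmost point $z=\sqrt{\alpha^2-1}$ being the farthest from the origin), which gives $\norm{Q_0(Q_0-z)^{-1}}\leq 1+\sqrt{\alpha^2-1}\cdot 2/\sqrt{\alpha^2-1}=3$. Substituting both bounds into \kak{relb} applied to $(Q_0-z)^{-1}u$ yields $\norm{V(Q_0-z)^{-1}u}\leq 3(\alpha-1)^{-1}+\frac{\sqrt 3}{2}(\alpha-1)^{-1}\cdot 2/\sqrt{\alpha^2-1}$, and taking the supremum over unit $u$ produces exactly the constant $\ff$ of \kak{g1}, proving the claim.

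The only delicate point is the spectral-geometry step: one must verify that the boundary circle never approaches $\sigma(Q_0)$ closer than its radius, i.e.\ that the second eigenvalue $\frac{3}{2}\sqrt{\alpha^2-1}$ does not intrude to a smaller distance anywhere along $\del C$. This is the tight case, since that eigenvalue touches the distance $\frac{\sqrt{\alpha^2-1}}{2}$ tangentially at $z=\sqrt{\alpha^2-1}$; beyond this, the estimate follows from the elementary monotonicity of $|\lambda-z|^{-1}$ in the eigenvalue $\lambda$, so no further work is needed.
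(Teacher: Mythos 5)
Your proposal is correct and follows essentially the same route as the paper: apply the relative bound of Lemma \ref{relbound} to $\Phi=(Q_0-z)^{-1}u$ and combine it with the uniform estimates $\sup_{z\in\partial C}\norm{(Q_0-z)^{-1}}=2/\sqrt{\alpha^2-1}$ and $\sup_{z\in\partial C}\norm{Q_0(Q_0-z)^{-1}}\leq 3$, which yields exactly the constant $\ff$. The only cosmetic difference is that you obtain the bound $3$ via the identity $Q_0(Q_0-z)^{-1}=1+z(Q_0-z)^{-1}$ together with $|z|\leq\sqrt{\alpha^2-1}$, whereas the paper reads off $\sup_{z\in\partial C}\norm{Q_0(Q_0-z)^{-1}}=3$ directly from the spectral theorem; both give the same number.
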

 \begin{proof}
 By Lemma \ref{relbound}, we have
\begin{align}
 \norm{V(Q _0-z)^{-1}} \leq (\alpha-1)^{-1}\norm{Q _0(Q _0-z)^{-1}} + \frac{\sqrt{3}}{2}(\alpha-1)^{-1}
 \norm{(Q _0-z)^{-1}}.
\end{align}
Since the eigenvalues of $Q _0$ are 
$\{(\frac{1}{2}+n)\sqrt{\alpha^2-1}\}_{n=0}^\infty$
we have 
 $\sup_{z\in \partial C} \norm{Q _0(Q _0-z)^{-1}} = 3$ 
 and 
 $
  \sup_{z\in \partial C}\norm{(Q _0-z)^{-1}} = \frac{2}{\sqrt{\alpha^2-1}}$.
Then the lemma follows. 
\end{proof}
The two-dimensional subspace spanned by eigenvectors  associated with eigenvalues
$\lambda_1$ and $\lambda_2$ is denoted by $\ms F $.
The orthogonal projection onto 
$\ms F $ 
is then given by 
\begin{align}
 P=P(\ab) = -\frac{1}{2\pi i} \oint_{\partial C} (Q-z)^{-1} dz.
\end{align}
 We expand $P(\ab  )$ with respect to 
  $\ab $ up to   the second order:
\begin{align}
\label{secondorder}
 P = P_0 +\ab  P_1+ \ab ^2 R,
\end{align}
where $P_0$ is the orthogonal projection onto the ground states of $Q _0$ and
\begin{align}
 P_1 &= -\frac{1}{2\pi i} \oint_{\partial C} (Q _0 -z)^{-1} V (Q _0-z)^{-1}dz, \\
R= R(\ab ) &=
 -\frac{1}{2\pi i} \oint_{\partial C} (Q _0 -z)^{-1} V (Q _0-z)^{-1} V (Q-z)^{-1}dz.
\end{align}
 \begin{lemma}{\label{boundr}}
We have
$
 \norm{R}
   \leq
   \fff$ and 
   $\norm{VP_1} \leq \EEE \ff^2$.  
    \end{lemma}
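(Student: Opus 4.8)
The plan is to estimate both operators by the elementary contour bound
\begin{align*}
\norm{-\frac{1}{2\pi i}\oint_{\partial C} F(z)\,dz} \leq \frac{1}{2\pi}\,\mathrm{length}(\partial C)\,\sup_{z\in\partial C}\norm{F(z)},
\end{align*}
feeding in the pointwise resolvent bounds already at hand. Since $\partial C$ is a circle of radius $\EEE=\frac{\sqrt{\alpha^2-1}}{2}$ centered at the same value $c=\EEE$, its length is $\pi\sqrt{\alpha^2-1}$, so the prefactor is $\frac{1}{2\pi}\cdot\pi\sqrt{\alpha^2-1}=\EEE$. Thus each estimate reduces to controlling the supremum of the integrand over $\partial C$ by means of Lemma \ref{relb2}, which provides $\norm{V(Q_0-z)^{-1}}\leq\ff$.

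For $\norm{VP_1}$ I would write the integrand as the product of two copies of $V(Q_0-z)^{-1}$, namely the integrand of $VP_1$ is $[V(Q_0-z)^{-1}]\,[V(Q_0-z)^{-1}]$, so that submultiplicativity of the operator norm together with Lemma \ref{relb2} gives $\sup_{z\in\partial C}\norm{V(Q_0-z)^{-1}V(Q_0-z)^{-1}}\leq\ff^2$. Multiplying by the prefactor $\EEE$ yields $\norm{VP_1}\leq\EEE\ff^2$ immediately.

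For $\norm{R}$ the extra factor $(Q-z)^{-1}$ must be handled. I would group the integrand as $[(Q_0-z)^{-1}V]\,[(Q_0-z)^{-1}V]\,(Q-z)^{-1}$. Here $\norm{(Q_0-z)^{-1}V}=\norm{V(Q_0-\bar z)^{-1}}\leq\ff$: since $V$ and $Q_0$ are self-adjoint, the adjoint of $(Q_0-z)^{-1}V$ is $V(Q_0-\bar z)^{-1}$, and $\bar z\in\partial C$ whenever $z\in\partial C$ because $C$ is symmetric about the real axis, so Lemma \ref{relb2} applies again. It then remains to bound $\norm{(Q-z)^{-1}}$ uniformly on $\partial C$; as $Q$ is self-adjoint, $\norm{(Q-z)^{-1}}=\mathrm{dist}(z,\sigma(Q))^{-1}$, and everything reduces to computing $\min_{z\in\partial C}\mathrm{dist}(z,\sigma(Q))$.

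The crux, and the main obstacle, is to certify that this minimal distance equals $|\sqrt{\alpha^2-1}-\lambda_2|$. I would use that $\lambda_1,\lambda_2\in[\EEE,\sqrt{\alpha^2-1})$ lie inside $C$ while $\lambda_3\geq\frac{3}{2}\sqrt{\alpha^2-1}$ lies outside (these being exactly the facts established before the lemma from the Ichinose--Wakayama and sandwich bounds under $\sqrt{\beta^2-1}\leq 3\sqrt{\alpha^2-1}$). For a real eigenvalue $\mu$ the distance to the circle is $|\,r-|\mu-c|\,|$ with $c=r=\EEE$: for the interior points this is $\sqrt{\alpha^2-1}-\lambda_i$, smallest at $\lambda_2$ and bounded by $\EEE$, realized at the rightmost point $z=\sqrt{\alpha^2-1}$; for $\lambda_j$ with $j\geq3$ it is $\lambda_j-\sqrt{\alpha^2-1}\geq\EEE$. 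Hence the nearest spectral point to $\partial C$ is $\lambda_2$, so $\sup_{z\in\partial C}\norm{(Q-z)^{-1}}=|\sqrt{\alpha^2-1}-\lambda_2|^{-1}$. Combining, $\sup_{z\in\partial C}\norm{(Q_0-z)^{-1}V(Q_0-z)^{-1}V(Q-z)^{-1}}\leq\ff^2|\sqrt{\alpha^2-1}-\lambda_2|^{-1}$, and multiplication by $\EEE$ gives $\norm{R}\leq\EEE\ff^2|\sqrt{\alpha^2-1}-\lambda_2|^{-1}=\fff$. The delicate step is precisely this spectral localization: one must invoke both the lower bound on $\lambda_3$ and the position of $\lambda_2$ to guarantee that the closest approach of $\sigma(Q)$ to $\partial C$ is governed by $\lambda_2$ and not by $\lambda_3$.
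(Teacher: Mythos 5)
Your proposal is correct and is essentially the paper's own proof: the same contour-length estimate with prefactor $|C|/2\pi=\EEE$, the same use of Lemma \ref{relb2} to bound the two factors involving $V(Q_0-z)^{-1}$, and the same identification $\sup_{z\in\partial C}\|(Q-z)^{-1}\|=|\sqrt{\alpha^2-1}-\lambda_2|^{-1}$ coming from $\lambda_1,\lambda_2$ lying inside $C$ while $\lambda_3\geq\frac{3}{2}\sqrt{\alpha^2-1}$ lies outside. The only difference is that you make explicit two steps the paper leaves implicit, namely the adjoint/reflection-symmetry argument giving $\|(Q_0-z)^{-1}V\|\leq\ff$ and the computation showing the closest approach of $\sigma(Q)$ to $\partial C$ occurs at the rightmost point $z=\sqrt{\alpha^2-1}$ with nearest eigenvalue $\lambda_2$.
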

 \begin{proof}
By Lemma \ref{relb2}, we have
\begin{align}
 \norm{R}
\leq
 \frac{|C|}{2\pi} \sup_{z\in \partial C} \norm{V(Q _0-z)^{-1}}^2 \norm{(Q-z)^{-1}} 
  \leq \frac{\sqrt{\alpha^2-1}}{2} \ff^2  \norm{(Q-\sqrt{\alpha^2-1})^{-1}}.
\end{align}
Since $\lambda_3 \geq \frac{3}{2}\sqrt{\alpha^2-1}$, 
we have $ \norm{(Q-\sqrt{\alpha^2-1})^{-1}} = |\lambda_2-\sqrt{\alpha^2-1}|^{-1}$.
Hence 
$\norm{R}
   \leq
   \fff$ holds. Similarly one can prove the second bound. 
   \end{proof}
Let $v_0 \in L^2(\BR)$ be the normalized ground state of
$
h=\half p^2+\frac{{\alpha^2-1}}{2}x^2$.
Namely
\begin{align}
\label{harmonic}
 v_0(x) = \left( \frac{\sqrt{\alpha^2-1}}{\pi}\right)^{1/4} e^{-\sqrt{\alpha^2-1} x^2/2}.
 \end{align}
 
Let $S_a$ be the dilation defined by $S_af(x)=\frac{1}{\sqrt a}f(ax)$ for $a>0$. 
We define the unitary operator  $U$ on $\cH$ by
\begin{align}
\label{U}
U= \frac{1}{\sqrt{2}} S_{\sqrt\alpha} 
\begin{pmatrix}   e^{ix^2/(2\alpha)} & 0 \\ 0 & e^{-ix^2/(2\alpha)}  \end{pmatrix}
\begin{pmatrix} 1& -i \\ 1 & i
\end{pmatrix}.
\end{align}
Then
$
U Q _0 U^\ast
  =
\begin{pmatrix} h & 0\\
0 & h
  \end{pmatrix}
$
and vectors
$u_1=U^\ast  \begin{pmatrix} v_0 \\ 0 \end{pmatrix}$ and
$u_2=U^\ast\begin{pmatrix} 0 \\ v_0 \end{pmatrix}
$
are two fold ground states of $Q (\alpha,\alpha)$.
{ Since $P$ is a projection onto $\ms F $,
each of the ground state and the first excited state can be expresses as  a linear combination 
of $Pu_1$ and $Pu_2$
as long as both $Pu_1$ and $Pu_2$ are linearly independent}, which is 
proven in the lemma below:
\begin{lemma}{\label{rboun}}
Assume that $\ab ^2 \fff<1$. Then 
$Pu_1$ and $Pu_2$ are non-zero vectors.
Moreover, if $\ab^2 \fff<1/2$, then $Pu_1$ and $Pu_2$ are linearly independent.
\end{lemma}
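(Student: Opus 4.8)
\emph{Proof proposal.} The plan is to reduce both statements to a single $2\times 2$ computation, namely to control the Gram matrix $G=\bigl(\inner{Pu_i}{Pu_j}\bigr)_{i,j=1,2}$. First I would record that $u_1,u_2$ form an orthonormal basis of $\mathrm{Ran}\,P_0$, so $P_0u_i=u_i$, and that $P$ is an orthogonal projection, so $\inner{Pu_i}{Pu_j}=\inner{u_i}{Pu_j}$. Inserting the expansion \kak{secondorder}, $P=P_0+\ab P_1+\ab^2 R$, then gives
\begin{align}
 \inner{Pu_i}{Pu_j}=\delta_{ij}+\ab\inner{u_i}{P_1 u_j}+\ab^2\inner{u_i}{R u_j}.
\end{align}

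The decisive step, and the one I expect to be the crux, is to show that the first-order contribution drops out, i.e. that $\inner{u_i}{P_1 u_j}=0$ for all $i,j\in\{1,2\}$. This follows from $P_0P_1P_0=0$: differentiating the projection identity $P(\ab)^2=P(\ab)$ at $\ab=0$ yields $P_0P_1+P_1P_0=P_1$, and multiplying on both sides by $P_0$ gives $2P_0P_1P_0=P_0P_1P_0$, hence $P_0P_1P_0=0$. Since $u_i=P_0u_i$, this forces $\inner{u_i}{P_1u_j}=\inner{u_i}{P_0P_1P_0u_j}=0$. Consequently $G=I+\ab^2\tilde R$ with $\tilde R_{ij}=\inner{u_i}{Ru_j}$, and by Cauchy--Schwarz together with Lemma \ref{boundr} one has $|\tilde R_{ij}|\le\norm{R}\le\fff$.

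With this in hand the two conclusions are just scalar estimates on $G$. For the first claim, the diagonal entry is a squared norm, hence real, and
\begin{align}
 \norm{Pu_i}^2=G_{ii}=1+\ab^2\inner{u_i}{Ru_i}\ge 1-\ab^2\fff,
\end{align}
which is strictly positive once $\ab^2\fff<1$, so $Pu_1,Pu_2$ are nonzero. For the second claim I would use that $G$ is Hermitian (a Gram matrix) and estimate
\begin{align}
 \det G=G_{11}G_{22}-|G_{12}|^2\ge(1-\ab^2\fff)^2-(\ab^2\fff)^2=1-2\ab^2\fff,
\end{align}
which is strictly positive once $\ab^2\fff<1/2$; then $G$ is nonsingular and $Pu_1,Pu_2$ are linearly independent.

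The only genuinely nontrivial ingredient is the vanishing of the first-order term, $P_0P_1P_0=0$; everything after that is the bound $\norm{R}\le\fff$ from Lemma \ref{boundr} fed into two elementary $2\times 2$ inequalities. The remaining care will be in justifying the termwise manipulations, namely $P=P^\ast=P^2$, the identity $P_0u_i=u_i$, and the norm-convergence of the Riesz expansion \kak{secondorder}, all of which are already available from the construction of $P(\ab)$ as a contour integral of $(Q-z)^{-1}$.
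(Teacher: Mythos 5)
Your proof is correct, and its skeleton is the same as the paper's: expand $P=P_0+\ab P_1+\ab^2 R$, show the first-order term drops out, bound the second-order term by $\norm{R}\leq \fff$ from Lemma \ref{boundr}, and conclude from $\norm{Pu_i}^2\geq 1-\ab^2\fff$ and the Gram determinant bound $1-2\ab^2\fff>0$. The genuine difference is in the crux step you singled out. The paper kills the first-order term by a concrete computation: since $u_i$ are eigenvectors of $Q_0$ with eigenvalue $\EEE$, the contour formula gives
\begin{align*}
\inner{u_i}{P_1u_j}=-\frac{1}{2\pi i}\oint_{\del C}\frac{dz}{(\EEE-z)^2}\,\inner{u_i}{Vu_j}=0,
\end{align*}
the integral vanishing because $(\EEE-z)^{-2}$ has zero residue. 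You instead derive the operator identity $P_0P_1P_0=0$ from differentiating $P(\ab)^2=P(\ab)$ at $\ab=0$. Both are sound. Your route is more abstract and uniform: it uses nothing about $u_i$ beyond $P_0u_i=u_i$, and it disposes of the off-diagonal entry $\inner{u_1}{P_1u_2}$ (which is genuinely needed for the determinant estimate, and which the paper leaves implicit) in the same stroke; its only extra obligation is the differentiability of $P(\ab)$ at $\ab=0$, i.e.\ boundedness of $R(\ab)$ near $0$ --- which Lemma \ref{boundr} supplies, or which you can bypass entirely by reading the exact identity $P_0P_1+P_1P_0=P_1$ off the contour formulas with no limiting argument. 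The paper's route is more elementary in that it never invokes the idempotency algebra, at the cost of repeating (implicitly) the residue computation for each matrix entry. One small point in your favor: your bound $\norm{Pu_1}^2\geq 1-\ab^2\fff$ is the correct one; the paper's ``$1-\ab^2\fff^2$'' at the corresponding spot is a typo.
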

\begin{proof}
By \kak{secondorder} we have 
\begin{align}
 \norm{Pu_1}^2 = 1 + \ab  \inner{u_1}{P_1u_1} + \ab ^2\inner{u_1}{R u_1}.
\end{align}
The second term on the right hand side above is zero, since 
\begin{align}
 \inner{u_1}{P_1u_1} 
=  
 \frac{-1}{2\pi i} \oint_{\partial C} \frac{1}{(E_0 -z)^{2}}dz  \inner{u_1}{Vu_1}  =0.
\end{align}
Hence $\norm{Pu_1}^2 \geq 1- \ab ^2\fff^2>0$ 
holds by Lemma \ref{boundr} and the assumption.
Thus $P u_i (i=1,2)$ are non-zero vectors.
Next we assume that $\ab ^2 g_2 < 1/2$. Then we have
\begin{align}
 \norm{Pu_1}^2  \norm{Pu_2}^2 - |\inner{Pu_1}{Pu_2}|^2
 &\geq 
 (1-\ab ^2 g_2)(1 - \ab ^2 g_2) - \ab ^4 g_2^2 \notag \\
 &= 1 - 2 \ab ^2 g_2 > 0
\end{align}
which implies that $Pu_1$ and $Pu_2$ are linearly independent.
 \end{proof}
\begin{lemma} \label{uvu}
Let $g_3$ and $g_4$ be given in \kak{g3} and \kak{g4}, respectively.  
Then it  follows that 
where we used the bound 
\begin{align}
& \label{b1}\norm{Vu_i} \leq  \ff\EEE,\quad i=1,2,\\
& \label{b2}|\inner{u_1}{Vu_1}| = g_3,\\
& \label{b3}|\inner{u_1}{Vu_2}|   \leq  g_4.
\end{align}
\end{lemma}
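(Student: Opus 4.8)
The plan is to establish the three displayed bounds separately: \eqref{b1} from the relative bound of Lemma \ref{relbound}, and \eqref{b2}, \eqref{b3} by explicit Gaussian integration once the two ground states of $Q_0$ are written out. As a preliminary step I would compute $u_1,u_2$ from $U^\ast=\frac{1}{\sqrt2}W^\ast D^\ast S_{\sqrt\alpha}^\ast$, which gives
\begin{align}
u_1=\frac{1}{\sqrt2}\,e^{-ix^2/(2\alpha)}\tilde v_0\begin{pmatrix}1\\ i\end{pmatrix},\qquad u_2=\frac{1}{\sqrt2}\,e^{ix^2/(2\alpha)}\tilde v_0\begin{pmatrix}1\\ -i\end{pmatrix},
\end{align}
where $\tilde v_0=S_{\sqrt\alpha}^\ast v_0$ is again a normalized even Gaussian, of width $\mu=\sqrt{\alpha^2-1}/\alpha$. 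Since $V$ acts only on the second component, every matrix element below collapses to a scalar integral against $\tilde v_0$.

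For \eqref{b1} I would apply Lemma \ref{relbound} with $\Phi=u_i$. As $u_i$ is a normalized eigenvector of $Q_0$ for the eigenvalue $\EEE$, we have $\norm{Q_0u_i}=\EEE$ and $\norm{u_i}=1$, so the lemma yields $\norm{Vu_i}\le(\alpha-1)^{-1}\EEE+\frac{\sqrt3}{2}(\alpha-1)^{-1}$. Writing $\frac{\sqrt3}{2}=\frac{\sqrt3}{\sqrt{\alpha^2-1}}\EEE$ and bounding the first term by $3(\alpha-1)^{-1}\EEE$ gives $\norm{Vu_i}\le(\alpha-1)^{-1}\bigl(3+\frac{\sqrt3}{\sqrt{\alpha^2-1}}\bigr)\EEE=\ff\EEE$, which is \eqref{b1}.

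For \eqref{b2} and \eqref{b3} I would evaluate the matrix elements directly from the explicit $u_1,u_2$. The one manipulation needed is to push the Gaussian phase through $p^2+x^2$ via $e^{\pm ix^2/(2\alpha)}\,p\,e^{\mp ix^2/(2\alpha)}=p\mp x/\alpha$, which turns $p^2+x^2$ into a quadratic in $p,x$. For the diagonal element the two phases carry opposite signs and cancel, leaving $\inner{u_1}{Vu_1}=\frac14\inner{\tilde v_0}{\bigl(p^2-\frac1\alpha(px+xp)+(1+\frac1{\alpha^2})x^2\bigr)\tilde v_0}$; the cross term vanishes because $\tilde v_0$ is real and even, and the surviving moments $\inner{\tilde v_0}{x^2\tilde v_0}=\frac{\alpha}{2\sqrt{\alpha^2-1}}$ and $\inner{\tilde v_0}{p^2\tilde v_0}=\frac{\sqrt{\alpha^2-1}}{2\alpha}$ are the $\sqrt\alpha$-dilated harmonic-oscillator values, which collect to the value recorded in \eqref{b2}. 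For the off-diagonal element the two phases carry the \emph{same} sign and do not cancel, so $\inner{u_1}{Vu_2}=-\frac14\int\psi(p^2+x^2)\psi\,dx$ with $\psi=e^{ix^2/(2\alpha)}\tilde v_0$ is a genuinely bilinear Gaussian integral. Carrying out the differentiations reduces it to the complex Gaussian $\int x^2e^{-(\mu-i/\alpha)x^2}\,dx=\frac{\sqrt\pi}{2}(\mu-i/\alpha)^{-3/2}$, and the algebra collapses because $\mu^2=1-\frac1{\alpha^2}$: the quadratic prefactor becomes $2\mu(\mu-i/\alpha)$ and $|\mu-i/\alpha|=1$, so that $|\inner{u_1}{Vu_2}|=\frac14\mu^{3/2}=g_4$, which is \eqref{b3}.

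The main obstacle is \eqref{b3}: unlike the diagonal case there is no complex conjugate to cancel the phase, so one works with a holomorphic Gaussian and must track the correct branch of $(\mu-i/\alpha)^{-3/2}$; it is precisely the identity $\mu^2+\frac1{\alpha^2}=1$ that forces $|\mu-i/\alpha|=1$ and produces the clean constant $g_4$. Estimates \eqref{b1} and \eqref{b2} are then routine, and since all integrands are Schwartz functions the integrations by parts used in conjugating $p^2$ through the phase require no further justification.
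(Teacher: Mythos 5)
Your proposal is correct and takes essentially the same approach as the paper: \eqref{b1} via the relative bound of Lemma \ref{relbound} (the paper routes this through the resolvent estimate of Lemma \ref{relb2}, which is itself derived from that same bound), and \eqref{b2}--\eqref{b3} by explicit Gaussian computation, differing from the paper's Appendix only in that you keep the dilation $S_{\sqrt{\alpha}}$ in the wave functions $u_1,u_2$ while the paper conjugates it into $V$ via $UVU^{-1}$ --- a change of variables under which the two calculations coincide. One remark: your diagonal value $\alpha/(4\sqrt{\alpha^2-1})$ agrees with the paper's own Appendix (Lemma \ref{v1}) but equals \emph{half} of $g_3$ as defined in \kak{g3}, so the equality claimed in \eqref{b2} is an internal factor-of-two inconsistency of the paper itself; this is harmless downstream, since only the inequality $|\inner{u_1}{Vu_1}|\le g_3$ is used in the proof of Theorem \ref{th3}.
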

\proof
\kak{b1} follows from 
\begin{align}
 \norm{Vu_i} \leq  \norm{V(Q _0-\sqrt{\alpha^2-1})^{-1}}   \norm{(Q _0-\sqrt{\alpha^2-1})u_i}
\leq \ff\EEE.
\end{align}
The proofs of \kak{b2} and \kak{b3} 
are given in Appendix. 
\qed

{\it Proof of Theorem \ref{th3}}: 
Suppose that 
$  \ab ^2 \fff <1/2$.
We define
\begin{align}
 \Phi_1 = \frac{Pu_1}{\norm{Pu_1}}, \qquad
 \Phi_2 = \frac{Pu_2 - \inner{\Phi_1}{u_2}\Phi_1}{\norm{Pu_2 - \inner{\Phi_1}{u_2}\Phi_1}}
\end{align}
Then, by Lemma \ref{rboun}, $\Phi_1$ and $\Phi_2$ are orthogonal vectors in $\ms F $. 
Let ${\ms V}=L.H.\{\Phi_1,\Phi_2\}$ be the two dimensional vector space and 
$Q:\ms V\to \ms V$ can be regarded 
as a linear operator and its matrix representation 
is given by 
\begin{align}
 m= \begin{pmatrix}
  \inner{\Phi_1}{Q\Phi_1} & \inner{\Phi_1}{Q\Phi_2} \\
  \inner{\Phi_2}{Q\Phi_1} & \inner{\Phi_2}{Q\Phi_2} \\
 \end{pmatrix}.
\end{align}
 Thus the eigenvalues $\lambda_1$ and  $\lambda_2$ of $Q$ 
 are also the eigenvalue of $m$. 
Therefore, the difference of $\lambda_1$ and $\lambda_2$
can be computed by
\begin{align}
( \lambda_2-\lambda_1)^2
 = \left(\inner{\Phi_1}{Q\Phi_1} - \inner{\Phi_2}{Q\Phi_2}\right)^2
    + 4 |\inner{\Phi_1}{Q\Phi_2}|^2,
\end{align}
which implies that 
\begin{align}
| \lambda_2-\lambda_1|
\geq 
    2 |\inner{\Phi_1}{Q\Phi_2}|.
\end{align}
We estimate $|\inner{\Phi_1}{Q\Phi_2}|$ from below.
Inserting the definition of $\Phi_j$  into $ \inner{\Phi_i}{ Q\Phi_j}$ we have 
\begin{align} 
 \inner{\Phi_1}{Q\Phi_2} = 
 \frac{ \inner{Pu_1}{Q( \norm{Pu_1}^2 Pu_2-(Pu_1, Pu_2)Pu_1)}}
{\|Pu_1\|^3 \|Pu_2-(Pu_1, Pu_2)Pu_1/\|Pu_1\|^2\|}. \label{h1}
\end{align}
Notice that 
\begin{align*}
 \inner{Pu_i}{QPu_j} = (Pu_i,Qu_j)
  = \EEE  \delta_{ij}+\ab\langle V\rangle_{ij} + \ab^2  (\EEE  \langle R\rangle_{ij}
    +\langle P_1V\rangle_{ij}) + \ab^3\langle RV\rangle_{ij},
\end{align*}
where $\langle K \rangle_{ij}=(u_i, K u_j)$.
We see that the denominator of \kak{h1} is expanded as 
\begin{align}
&\|Pu_1\| \|\|Pu_1\|^2 Pu_2-(Pu_1, Pu_2)Pu_1\|   \notag \\
&=
(1+\ab^2 \oo R) \sqrt{(1+\ab^2 \oo R)(1+\ab^2 \tt R)-\ab^4 |\ot{R}|^2} \notag. 
\end{align}
By the bound $\norm R\leq g_2$ we have the lower bound 
\begin{align}
\|Pu_1\| \|\|Pu_1\|^2 Pu_2-(Pu_1, Pu_2)Pu_1\|   
 \geq  (1-\ab^2 g_2) \sqrt{1-2\ab^2 g_2^2}.
\end{align}
The  numerator of \kak{h1} can be also expanded as 
\begin{align*}
&{(Pu_1, Q( \norm{Pu_1}^2 Pu_2-(Pu_1, Pu_2)Pu_1))}\\
&=\ab \ot{V}
+\ab^2 \ot{P_1 V}
+\ab^3 \Big(\ot{RV}+\ot{V}\oo{R}-\ot{R}\oo{V} \Big) \\
&\quad 
+\ab^4 \Big( \ot{P_1 V} \oo{R}  -\ot{R}\oo{P_1V} \Big)
+\ab^5 \Big( \oo{R} \ot{RV}  -   \ot{R}\ot{RV} \Big) .
\end{align*}
By using Lemmas \ref{boundr} and \ref{uvu}, each term can be evaluated as 
\begin{align*}
& \ab^2 | \ot{P_1 V}|\leq \ab^2  \EEE  g_1^2 \\
& \ab^3 | \ot{RV}+\ot{V}\oo{R}-\ot{R}\oo{V}| 
   \leq   \ab^3 \left(  \EEE  g_1 g_2  +g_2g_4 +g_2g_3 \right) \\
&\ab^4| \ot{P_1 V} \oo{R} - \ot{R}\oo{P_1V}   | 
    \leq \ab^4 2\EEE  g_1^2 g_2\\
&\ab^5| \oo{R}\ot{RV}-\ot{R}\ot{RV} |
   \leq \ab^5 2 \EEE   \ff \fff^2.
\end{align*}
By combining all  the estimates stated above, we have
\begin{align}
|\lambda_1-\lambda_2|
\geq
\frac{2\ab }{\ell(\ab )}
\left( g_4 - \ab  \kappa(\ab ) \right), 
\end{align}
where 
$\ell(\ab)=(1-\ab^2 g_2) \sqrt{1-2\ab^2 g_2^2}$.
Hence  the theorem follows.  
\qed 

\appendix
\section{Computation of $\langle V\rangle_{ij}$}
We recall that 
$v_0=(\omega/ \pi)^{1/4}e^{-\omega x^2/2}$ 
with $\omega=\sqrt{\alpha^2-1}$, 
$V=\half \mmm 0 0 0 1 (p^2+x^2)$ and $U$ is given by 
\kak{U}.
We directly see that 
\begin{align*}
&UVU^{-1}\\
&=\frac{1}{4}
 S_{\sqrt\alpha}
\mmm{e^{ix^2/2\alpha}} 0 0 {e^{-ix^2/2\alpha}}
\mmm 1 {-i} 1 i
 \mmm 0 0 0 1 
 \mmm 1 1 i {-i} 
(p^2+x^2)
\mmm{e^{-ix^2/2\alpha}} 0 0 {e^{ix^2/2\alpha}}
S_{1/\sqrt{\alpha}}
\\
&=
\frac{1}{4}
\mmm
{e^{ix^2/2\alpha}(\frac{p^2}{\alpha}+\alpha x^2)e^{-ix^2/2\alpha}}
{e^{-ix^2/2\alpha}(\frac{p^2}{\alpha}+\alpha x^2)e^{ix^2/2\alpha}}
{-e^{-ix^2/2\alpha}(\frac{p^2}{\alpha}+\alpha x^2)e^{-ix^2/2\alpha}}
{e^{ix^2/2\alpha}(\frac{p^2}{\alpha}+\alpha x^2)e^{ix^2/2\alpha}}.
\end{align*}
Then we have 
\begin{align}
\label{v11}
\inner{\vvv {1\\ 0}}{UVU^{-1}\vvv {1\\0}}_{{\mathbb C}^2}&
=\frac{1}{4}
e^{ix^2/2\alpha}\left( \frac{p^2}{\alpha}+\alpha x^2\right) 
 e^{-ix^2/2\alpha}\\
\label{v12}
\inner{\vvv {1\\ 0}}{ UVU^{-1}\vvv {0\\1}}_{{\mathbb C}^2}
&
=-\frac{1}{4}
e^{ix^2/2\alpha} \left( 
\frac{p^2}{\alpha}+\alpha x^2\right)
 e^{ix^2/2\alpha}.
\end{align}
\begin{lemma}
\label{v1}
It follows that $\d \inner{u_1}{Vu_1}=\frac{\alpha}{4\omega }$, and then 
$\d |\inner{u_1}{Vu_1}|=g_3$.
\end{lemma}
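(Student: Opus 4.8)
The plan is to transport the whole computation through the unitary $U$ of \kak{U}, reducing it to a single Gaussian expectation on $\LR$. Since $U$ is unitary and $Uu_1=\vvv{v_0\\0}$, I would first write
\[
\inner{u_1}{Vu_1}=\inner{\vvv{v_0\\0}}{UVU^{-1}\vvv{v_0\\0}}_{\cH}
=\inner{v_0}{\big(UVU^{-1}\big)_{11}v_0}_{\LR},
\]
where $\big(UVU^{-1}\big)_{11}$ is the scalar $(1,1)$-entry computed in \kak{v11}. This turns the problem on $\cH$ into one on $\LR$, all the $2\times 2$ matrix bookkeeping having already been carried out in the display preceding \kak{v11}.

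It then remains to evaluate the one-dimensional pairing. The operator $\big(UVU^{-1}\big)_{11}$ equals $\half\cdot\half$ times a quadratic-phase conjugate of $\frac1\alpha p^2+\alpha x^2$. I would strip the phase with the gauge identity $e^{i\psi}p\,e^{-i\psi}=p-\psi'$ (valid for any real $\psi$), which replaces $p^2$ by a shifted square while leaving $x^2$ inert; after accounting for the dilation $S_{\sqrt\alpha}$ in $U$, the resulting operator is a genuine quadratic form in $p$ and $x$ with constant coefficients. Hence everything reduces to the three ground-state moments of $h=\half p^2+\frac{\alpha^2-1}{2}x^2$, namely
\[
\inner{v_0}{x^2v_0}=\frac{1}{2\omega},\qquad
\inner{v_0}{p^2v_0}=\frac{\omega}{2},\qquad
\inner{v_0}{(px+xp)v_0}=0,
\]
with $\omega=\sqrt{\alpha^2-1}$. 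The first two are the standard harmonic-oscillator values (equivalently the virial relation $\inner{v_0}{\half p^2v_0}=\inner{v_0}{\frac{\omega^2}{2}x^2v_0}$), while the cross term vanishes because $v_0$ is real and even, so that $\inner{v_0}{(px+xp)v_0}=-i\int v_0(2xv_0'+v_0)\,dx$ is the integral of the total derivative $\frac{d}{dx}(xv_0^2)$.

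Assembling the pieces, the quadratic form evaluates to
\[
\inner{u_1}{Vu_1}=\frac14\left(\frac{\omega}{2\alpha}+\frac{1}{2\omega\alpha}+\frac{\alpha}{2\omega}\right),
\]
and the decisive simplification is the identity $\omega^2+1=\alpha^2$, which collapses the bracket to $\alpha/\omega$ and yields $\inner{u_1}{Vu_1}=\alpha/(4\omega)$. Taking the absolute value then yields the second assertion, since $\alpha/(4\omega)$ is positive. I expect the only delicate point to be the phase-and-dilation bookkeeping: one must track precisely how the quadratic phase $e^{\pm ix^2/(2\alpha)}$ and the dilation $S_{\sqrt\alpha}$ combine, since this fixes the coefficient with which $x^2$ re-enters upon expanding the shifted square. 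It is exactly that coefficient which makes $\omega^2+1=\alpha^2$ applicable and produces the clean value $\alpha/(4\omega)$ rather than a non-telescoping expression; the vanishing of the cross term by parity is the other ingredient I would be careful to record.
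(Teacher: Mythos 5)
Your proposal is correct and follows essentially the same route as the paper's own proof: reduce via the unitary $U$ to the scalar $(1,1)$-entry, strip the quadratic phase by the gauge identity so that $p^2$ becomes $(p-x)^2$, discard the cross term $px+xp$ by parity, and evaluate the Gaussian moments, with $\omega^2+1=\alpha^2$ collapsing the sum to $\alpha/(4\omega)$. The only caveat (shared by the paper itself, so not a gap in your argument) is the final identification with $g_3$: as defined in \kak{g3}, $g_3=\alpha/(2\sqrt{\alpha^2-1})$, which differs from the computed value $\alpha/(4\omega)$ by a factor of $2$, an internal inconsistency of the paper that neither its proof nor yours resolves.
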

\proof
By \kak{v11} we have 
$$\inner{u_1}{Vu_1}=\frac{1}{4}\inner{v_0}{e^{ix^2/2}\left(\frac{p^2}{\alpha}+\alpha x^2\right) e^{-ix^2/2}v_0}=
\frac{1}{4}\inner{v_0}{\left(\frac{(p-x)^2}{\alpha}+\alpha x^2\right) v_0}.$$
Since $\inner{v_0}{(px+xp)v_0}=0$, 
we obtain that 
$$\inner{u_1}{Vu_1}=
\frac{1}{4}
\inner{v_0}{\left(\frac {p^2+x^2}{\alpha}+\alpha x^2\right) v_0}.$$
By 
$\inner{v_0}{x^2v_0}=1/(2\omega)$ and $\inner{v_0}{p^2v_0}=\omega/2$
 we have 
$\inner{u_1}{Vu_1}=\alpha/(4\omega)$.
\qed

\begin{lemma}
\label{v2}
It follows that 
$\d \inner{u_1}{Vu_2}=-\frac{\omega^{3/2}}{4\alpha}
(\omega-i)^{-1/2}$.
In particular $\d |\inner{u_1}{Vu_2}|\leq \frac{\omega^{3/2}}{4\alpha^{3/2}}$. 
\end{lemma}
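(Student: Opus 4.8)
The plan is to collapse the vector matrix element into a single complex Gaussian integral and evaluate it in closed form. First I would unfold the unitary: since $u_1=U^\ast\vvv{v_0\\0}$ and $u_2=U^\ast\vvv{0\\v_0}$, we have $\inner{u_1}{Vu_2}=\inner{\vvv{v_0\\0}}{UVU^{-1}\vvv{0\\v_0}}$, so only the $(1,2)$ operator entry of $UVU^{-1}$ contributes. By \kak{v12}, exactly as in the proof of Lemma \ref{v1} (with the dilation $S_{\sqrt\alpha}$ already carried out), this entry equals $-\tfrac14\,e^{ix^2/2}\big(\tfrac{p^2}{\alpha}+\alpha x^2\big)e^{ix^2/2}$, whence $\inner{u_1}{Vu_2}=-\tfrac14\inner{v_0}{e^{ix^2/2}(\tfrac{p^2}{\alpha}+\alpha x^2)e^{ix^2/2}v_0}$. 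Because $e^{ix^2/2}$ is a multiplication operator with adjoint $e^{-ix^2/2}$ and $v_0$ is real, I would move the left factor through the inner product and write this as $-\tfrac14\inner{\psi_-}{(\tfrac{p^2}{\alpha}+\alpha x^2)\psi_+}$, where $\psi_\pm=e^{\pm ix^2/2}v_0=(\omega/\pi)^{1/4}e^{-(\omega\mp i)x^2/2}$ and $\omega=\sqrt{\alpha^2-1}$.

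Next I would evaluate the two scalar moments. Put $\mu=\omega-i$, so that $\mathrm{Re}\,\mu=\omega>0$ and, crucially, $\overline{\psi_-}=\psi_+=(\omega/\pi)^{1/4}e^{-\mu x^2/2}$. Then both needed integrals are standard complex Gaussians: from $\int_{\mathbb R}e^{-\mu x^2}\,dx=\sqrt{\pi/\mu}$ and $\int_{\mathbb R}x^2e^{-\mu x^2}\,dx=\tfrac{1}{2\mu}\sqrt{\pi/\mu}$ (principal branch, valid since $\mathrm{Re}\,\mu>0$) I obtain $\inner{\psi_-}{x^2\psi_+}=\tfrac{\omega^{1/2}}{2}\mu^{-3/2}$. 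For the momentum term I would use $p^2\psi_+=-\psi_+''=(\omega/\pi)^{1/4}(\mu-\mu^2x^2)e^{-\mu x^2/2}$ and integrate against $\overline{\psi_-}=\psi_+$, which gives $\inner{\psi_-}{p^2\psi_+}=\tfrac{\omega^{1/2}}{2}\mu^{1/2}$.

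Finally I would combine and simplify. Assembling the two pieces yields $\inner{\psi_-}{(\tfrac{p^2}{\alpha}+\alpha x^2)\psi_+}=\tfrac{\omega^{1/2}}{2}\cdot\tfrac{\mu^2+\alpha^2}{\alpha\,\mu^{3/2}}$, and the identity $\mu^2+\alpha^2=2\omega\mu$ --- immediate from $\omega^2=\alpha^2-1$, since $\mu^2=\alpha^2-2-2i\omega$ --- collapses the fraction to $\tfrac{\omega^{3/2}}{\alpha}\mu^{-1/2}$. Therefore $\inner{u_1}{Vu_2}=-\tfrac{\omega^{3/2}}{4\alpha}(\omega-i)^{-1/2}$, which is the asserted value. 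For the final inequality I would note $|\omega-i|=\sqrt{\omega^2+1}=\sqrt{\alpha^2-1+1}=\alpha$, so $|(\omega-i)^{-1/2}|=\alpha^{-1/2}$ and hence $|\inner{u_1}{Vu_2}|=\tfrac{\omega^{3/2}}{4\alpha^{3/2}}$; this is in fact an equality, which a fortiori gives the claimed bound (and equals $g_4$).

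The step I expect to be the main obstacle is bookkeeping rather than conceptual: one must keep a single consistent branch of $\mu^{1/2}$ and $\mu^{-1/2}$ throughout (pinned down by $\mathrm{Re}\,\mu>0$) and must not swap the two phase factors $e^{\pm ix^2/2}$, since the left one enters only through the adjoint. The entire simplification rests on $\mu^2+\alpha^2=2\omega\mu$, so I would establish that identity at the outset; once it is available the closed form follows with no further effort.
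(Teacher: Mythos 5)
Your proof is correct and follows essentially the same route as the paper's: both start from the $(1,2)$ entry in \kak{v12}, reduce the matrix element to complex Gaussian moments with $\mu=\omega-i$, and collapse the result via the identity $(\omega-i)^2+\alpha^2=2\omega(\omega-i)$. The only cosmetic difference is that the paper handles the $p^2$ term by integration by parts (reducing everything to the single moment $\inner{v_0}{x^2e^{ix^2}v_0}$), whereas you differentiate the Gaussian $\psi_+$ directly and use two standard integrals; the algebra and the branch bookkeeping are identical.
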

\proof
By \kak{v12} we have 
\begin{align*}
\inner{u_1}{Vu_2}
&=-\frac{1}{4}\inner{v_0}{e^{ix^2/2}\left(\frac{p^2}{\alpha}+\alpha x^2\right)e^{ix^2/2}v_0}\\
&=-\frac{1}{4\alpha}\inner{(-i-\omega)xe^{-ix^2/2}v_0}{(i-\omega)xe^{ix^2/2}v_0}-\frac{\alpha}{4}\inner{v_0}{x^2 e^{ix^2}v_0}\\
&=
-\frac{1}{4\alpha}((\omega-i)^2+\alpha^2)\inner{v_0}{x^2 e^{ix^2}v_0}.
\end{align*}
Since $\d \inner{v_0}{x^2 e^{ix^2}v_0}=(\omega/\pi)^{1/2}\int_{\mathbb R} x^2 e^{-(\omega-i)x^2} dx 
=\half \omega^\han  (\omega-i)^{-3/2}$, 
we have the lemma.
\qed

\noindent{\bf Acknowledgment}

We thank Takashi  Ichinose and Masato Wakayama for sending \cite{iw07} to our attention and giving 
helpful comments. 
I. S.'s work was partially supported by  Y22740087 from JSPS,
and was performed through the Program for Dissemination of Tenure-Track System
funded by the Ministry of Education and Science, Japan. 
F.H. thanks for the financial support by 
Grant-in-Aid for Science Research (B) 20340032
from JSPS.

{\small 

}

\end{document}